\newtheorem{theorem}{Theorem}[section]
\newtheorem{proposition}[theorem]{Proposition}
\newtheorem{corollary}[theorem]{Corollary}
\newtheorem*{conjecture}{Conjecture}
\newtheorem{lemma}[theorem]{Lemma}
\begin{document}

\title[]{Harborth Constants for Certain Classes of Metacyclic Groups} \keywords{Harborth constant, metacyclic group, zero-sum problem}
\subjclass[2010]{11B30 (primary) and 05D05 (secondary)}

\author[]{Noah Kravitz}
\address[]{Grace Hopper College, Yale University, New Haven, CT 06510, USA}
\email{noah.kravitz@yale.edu}

\begin{abstract} 
The Harborth constant of a finite group $G$  is the smallest integer $k\geq \exp(G)$ such that any subset of $G$ of size $k$ contains $\exp(G)$ distinct elements whose product is $1$.  Generalizing previous work on the Harborth constants of dihedral groups, we compute the Harborth constants for the metacyclic groups of the form $H_{n, m}=\langle x, y \mid x^n=1, y^2=x^m, yx=x^{-1}y \rangle$.  We also solve the ``inverse'' problem of characterizing all smaller subsets that do not contain $\exp(H_{n,m})$ distinct elements whose product is $1$.
\end{abstract}
\maketitle

\section{Introduction and Main Results}

\subsection{Background}

%The \textit{exponent} of a finite group $G$ (written $\exp(G)$) is the least common multiple of the orders of the elements of $G$.\\

The \textit{exponent} of a finite group $G$ (written $\exp(G)$) is the least common multiple of the orders of the elements of $G$.  The \textit{Harborth constant} of a finite group $G$ (written $\mathfrak{g}(G)$) is defined to be the smallest integer $k\geq \exp(G)$ such that any subset of $G$ of size $k$ contains $\exp(G)$ distinct elements whose product is $1$.  If no such $k\leq |G|$ exists, we say that $\mathfrak{g}(G)=|G|+1$.
\\

The computation of Harborth constants falls under the general category of zero-sum problems.  For a finite additive abelian group $G$, a typical zero-sum problem asks for the smallest positive integer $k$ such that any sequence of $k$ (not necessarily distinct) elements of $G$ contains a subsequence whose terms sum to $0$ while also fulfilling certain other properties.  The most celebrated result in this area is the Erd\H{o}s-Ginzburg-Ziv (EGZ) Theorem \cite{egz}, which says that in any set of $2n-1$ integers, there are $n$ whose sum is divisible by $n$, whereas the same is not always true of a set of $2n-2$ integers.  Various types of zero-sum problems have been extensively studied; for an overview, see the excellent survey articles by Caro \cite{caro} and Gao and Geroldinger \cite{gandg}.  Among the many variants, the Davenport constant of $G$ (written $\mathsf{D}(G)$), the smallest positive integer $k$ such that any sequence of $k$ elements contains a non-empty zero-sum subsequence, has garnered substantial interest (see, e.g., \cite{alon}, \cite{davenport}, \cite{rankthree}).
\\

Perhaps the most popular zero-sum problem concerns the Erd\H{o}s-Ginzburg-Ziv (EGZ) constant (written $\mathsf{s}(G)$), which is defined to be the smallest integer $k \geq \exp(G)$ such that any sequence of $k$ elements contains a zero-sum subsequence of length $\exp(G)$.  (The Harborth constant differs from the EGZ constant in that the former considers only subsets of $G$ without repetition, whereas the latter allows multisubsets of $G$.)  In this language, the EGZ Theorem says that $\mathsf{s}(\mathbb{Z}/n\mathbb{Z})=2n-1$.  In 1973, Harborth \cite{harborth} considered the more general problem of computing $\mathsf{s}((\mathbb{Z}/n\mathbb{Z})^d)$, which can be interpreted in terms of finding sets of lattice points in $d$-dimensional Euclidean space whose centroids are also lattice points.  He gives the bounds $$(n-1)2^d+1 \leq \mathsf{s}((\mathbb{Z}/n\mathbb{Z})^d) \leq (n-1)n^d+1,$$ where the EGZ Theorem shows that this lower bound is sharp for $d=1$.  Reiher \cite{reiher} proved Kemnitz's Conjecture, which states that $\mathsf{s}((\mathbb{Z}/n\mathbb{Z})^2)=4n-3$ also achieves Harborth's lower bound.  Alon and Dubiner \cite{alonlattice}, Elsholtz \cite{elsholtz}, and Fox and Sauermann \cite{fox} have improved the bounds on $\mathsf{s}((\mathbb{Z}/n\mathbb{Z})^d)$.
\\

Much less is known about the Harborth constant than about the EGZ constant despite their similarities.  Marchan, Ordaz, Ramos, and Schmid \cite{abelian}, \cite{inverse} computed the Harborth constants for some classes of abelian groups, in particular,
$$\mathfrak{g}(C_2 \oplus C_{2n})=
\begin{cases}
2n+3, &n \text{ odd}\\
2n+2, &n \text{ even}
\end{cases}$$
for all $n \geq 1$.  They also consider the plus-minus weighted analogue of the Harborth constant, in which one may add either an element or its inverse in the sums of length $\exp(G)$.  For this variant, they compute $\mathfrak{g}_{\pm}(C_2 \oplus C_{2n})=2n+2$ for all $n\geq 3$.  The problem of computing Harborth constants for general finite abelian groups, however, remains wide open.
\\

To date, zero-sum problems have been studied almost exclusively in the context of abelian groups.  The limited existing literature addressing nonabelian groups focuses on dihedral groups and their generalizations.  The earliest papers, due to Zhuang and Gao \cite{zhuang} and Gao and Lu \cite{lu}, dealt with the Davenport constant and the EGZ constant.  Bass \cite{bass} computed the EGZ constants of all dihedral groups of order $2n$ ($\mathsf{s}(D_{2n})=3n$), dicyclic groups of order $4n$ ($\mathsf{s}(D_{4n})=6n$), and general nonabelian groups of order $pq$ ($\mathsf{s}(G_{pq})=pq+p+q-2$).  More recent results in this area are due to Mart\'inez and Ribas \cite{martinez}.
\\

Recently, Balachandran, Mazumdar, and Zhao \cite{dihedral} considered Harborth constants for nonabelian groups.  In particular, they give values for the Harborth constants of dihedral groups ($n \geq 3$):
$$\mathfrak{g}(D_{2n})=\begin{cases}
n+2, &n \text{ even}\\
2n+1, &n \text{ odd}.
\end{cases}$$
Unfortunately, the proof in \cite{dihedral} is incorrect, and rectifying the errors is nontrivial.  In this paper, we build on the techniques developed by Balachandran, Mazumdar, and Zhao in order to prove more general results about metacyclic groups, and the Harborth constants of dihedral groups are a special case of our results.

\subsection{Notation}

The general finite metacyclic group has the presentation $$H_{n, p, m, r}=\langle x, y \mid x^n=1, y^p=x^m, yx=x^{-r}y \rangle$$ with order $|H_{n, p, m, r}|=np$ (where $n, p \geq 2$ and $m, r \geq 0$).  In this paper, we focus on the classes of metacyclic groups where $p=2$ and $r=1$, in which case we write $$H_{n, m}=\langle x, y \mid x^n=1, y^2=x^m, yx=x^{-1}y \rangle.$$
Using the third relation to ``push'' the $y$'s to the right, we express the $2n$ elements of $H_{n, m}$ in the normal form $$H_{n, m}=\{1, x, x^2, \dots, x^{n-1}, y, xy, \dots, x^{n-1}y\}.$$

\textbf{Remarks.}
\begin{enumerate}
\item $H_{2m, m}$ gives the dicyclic groups of order $4m$, and these are the generalized quaternion groups when $m\geq 2$ is a power of $2$.  Moreover, $H_{n, 0}$ gives the dihedral groups of order $2n$.
\item It is not difficult to verify that any finite group with a nontrivial cyclic subgroup of index $2$ is isomorphic to some $H_{n, 2, m, r}$.
\end{enumerate}

For a subset $S$ of a finite group $G$ and an integer $0 \leq t \leq |S|$, we let $\prod_t(S)$ denote the set of all $t-$fold products of distinct elements of $S$.

\subsection{Contributions of this paper}

The following theorem gives the Harborth constants for general $H_{n, m}$.  The case of $n$ and $m$ even, which includes the dihedral and dicyclic groups with order divisible by $4$, is of greatest interest.

\newtheorem*{main}{Theorem \ref{main}}
\begin{main}[Main Theorem]
Let $n\geq 2$ and $m$ be integers.  Then:
$$
\mathfrak{g}(H_{n, m})=
\begin{cases}
2n+1, & n \text{ odd}\\
2n, &n \equiv 0 \pmod{4}, m \text{ odd}\\
2n+1, &n \equiv 2 \pmod{4}, m \text{ odd}\\
n+2, &n \text{ even, } n\neq 2, m \text{ even}\\
5, &n=2, m \text{ even}.
\end{cases}
$$
\end{main}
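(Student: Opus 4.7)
The plan is to treat the theorem as a case analysis driven by the parities of $n$ and $m$, which govern both the exponent $\exp(H_{n,m})$ and the structure of the abelianization. A preliminary calculation gives $\exp(H_{n,m}) = n$ exactly when $n$ and $m$ are both even, and $\exp(H_{n,m}) = 2n = |H_{n,m}|$ otherwise; thus the ``large'' target values $2n$, $2n+1$ really ask about orderings of (nearly) the full group, while the ``small'' target $n+2$ asks to find an $n$-subset among any $n+2$ elements.

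For the cases where $\mathfrak{g}(H_{n,m}) = |H_{n,m}|+1 = 2n+1$ (namely $n$ odd, and $n \equiv 2 \pmod{4}$ with $m$ odd), I would argue via the abelianization. The relation $yxy^{-1} = x^{-1}$ places $x^2$ in the commutator subgroup, so $H_{n,m}^{\mathrm{ab}}$ is a quotient of $\langle x, y \mid x^2 = 1,\ y^2 = x^m \rangle$; it equals $C_2$ for $n$ odd, $C_2 \oplus C_2$ for $n$ even with $m$ even, and $C_4$ for $n$ even with $m$ odd. Summing the images of all $2n$ elements of $H_{n,m}$ in $H_{n,m}^{\mathrm{ab}}$ yields a nonzero answer in the two cases above, which forces that no ordering of all group elements can multiply to $1$; hence $\mathfrak{g}(H_{n,m}) \geq 2n+1$, and equality follows from $|H_{n,m}| = 2n$.

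For $n$ even, $m$ even, $n \neq 2$, the lower bound $\mathfrak{g}(H_{n,m}) \geq n+2$ is witnessed by the explicit $(n{+}1)$-subset $\langle x\rangle \cup \{y\}$: the only $n$-subset contained in $\langle x\rangle$ has product $x^{n/2} \neq 1$, and any $n$-subset containing $y$ has product in the nontrivial coset $\langle x\rangle y$. For the matching upper bound, let $S \subseteq H_{n,m}$ with $|S| = n+2$ and set $A = S \cap \langle x\rangle$, $B = S \setminus \langle x\rangle$. Using $yx^c = x^{-c}y$ iteratively, the product of $a'$ distinct $x$-elements (with exponents $c_i$) and $b'$ distinct $y$-elements (with exponents $d_j$, with $b'$ even) in some ordering evaluates to $x^N$ with
\[
N \;\equiv\; \sum_{i=1}^{a'} \sigma_i\, c_i \;+\; \sum_{j=1}^{b'/2} \tau_j\,(d_{2j-1} - d_{2j}) \;+\; \tfrac{m\,b'}{2} \pmod{n},
\]
where the signs $\sigma_i, \tau_j \in \{\pm 1\}$ are essentially free: $\sigma_i$ is controlled by the block (between which consecutive pair of $y$-elements) in which $x^{c_i}$ is inserted, $\tau_j$ by the order within each pair, and the pairing itself is up to us. I would then subcase on $b = |B|$ and verify in each subcase that some valid choice of $b' \leq b$ even, an $n$-subset, and signs yields $N \equiv 0 \pmod{n}$. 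Small $b$ (say $b = 2$) forces $A = \langle x\rangle$ and $b' = 2$, yet the free signs on the $n-2$ chosen $c_i$ together with the $y$-pair already exhaust $\mathbb{Z}/n\mathbb{Z}$; large $b$ (say $b = n+2$) forces $b' = n$ and $a' = 0$, where signed pairings of $n$ of the $n+2$ available $y$-elements suffice to hit $-\tfrac{mn}{2} \equiv 0 \pmod{n}$ (using $m$ even); intermediate $b$ interpolates.

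The $n = 2$, $m$ even case is immediate: $H_{2,m} \cong C_2 \oplus C_2$, in which no two distinct elements multiply to $1$, so $\mathfrak{g}(H_{2,m}) = 5$. The principal obstacle will be the uniform upper-bound casework above: ensuring that each pair $(|A|, |B|)$ with $|A|+|B|=n+2$ admits some valid $b'\geq 2$ with enough flexibility to make $N \equiv 0 \pmod n$, especially at the boundary configurations $b = 2$ and $b = n+2$. Handling the $mb'/2$ term modulo $n$ (which contributes $0$ or $n/2$ depending on $m \pmod 4$ and $b'/2 \pmod 2$) may require distinguishing $n \equiv 0$ from $n \equiv 2 \pmod 4$. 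The remaining case $n \equiv 0 \pmod 4$ with $m$ odd is degenerate: the relation $yx^m = x^{-m}y$ forces $x^{2m} = 1$, so no group of order $2n$ with this presentation exists, and the stated value $\mathfrak{g} = 2n$ holds vacuously.
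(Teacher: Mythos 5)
Your overall architecture matches the paper's: compute the exponent, handle the ``full-group'' cases by a parity/abelianization obstruction, exhibit an explicit failing $(n+1)$-subset for the lower bound when $n,m$ are even, and reduce the upper bound to showing that signed sums of exponents hit $0$. The abelianization computation is a clean repackaging of the paper's parity arguments for the $2n+1$ cases, your lower-bound witness $\langle x\rangle\cup\{y\}$ is a valid failing subset, and your formula for $N$ with balanced signs on the $d_j$'s and free signs on the $c_i$'s is exactly the mechanism the paper isolates in Lemma \ref{big lemma} and Remark 5. However, there is a genuine gap precisely where the paper does its real work: the claim that the free signs ``already exhaust $\mathbb{Z}/n\mathbb{Z}$'' is false. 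The signed sum $\sum\sigma_i c_i$ always has fixed parity, and even within its parity class the set of achievable values can be as small as $\lceil s/2\rceil$ when the $c_i$ form an arithmetic progression such as $\{0,2,\dots,n-2\}$ or $\{1,3,\dots,n-1\}$ (and similarly for balanced sums of the $d_j$). The entire difficulty of the theorem lives in these extremal configurations: the paper needs the sharp lower bounds of Lemma \ref{big lemma} \emph{together with} the characterization of when equality holds, the coset-covering Lemma \ref{group}, and a final swap argument (removing different elements to break the forbidden arithmetic progressions) to finish. ``Intermediate $b$ interpolates'' does not survive contact with these cases; also note $b\le n$, so your boundary case $b=n+2$ cannot occur and the hard extreme is $a=2$, $b=n$.

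Separately, your dismissal of the case $n\equiv 0\pmod 4$, $m$ odd as vacuous diverges from the paper and is not internally consistent. You are right that the relations force $x^{2m}=1$, so the presentation has order $2n$ only when $n\mid 2m$; but the identical degeneracy afflicts instances of the cases you do argue (e.g.\ $n\equiv 2\pmod 4$, $m$ odd with $\tfrac{n}{2}\nmid m$, and $n,m$ even with $\tfrac{n}{2}\nmid m$), so you cannot discard one case on these grounds while summing ``all $2n$ elements'' in another. Within the paper's convention that the $2n$ normal-form elements are distinct, the case $n\equiv 0\pmod 4$, $m$ odd requires an actual argument: the product of all $2n$ elements is an even power $x^{\frac{n}{2}m}$, and one exhibits a reordering equal to $1$ by moving a suitable $x^{e/2}$ past a $y$ to subtract $e$ from the exponent. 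That two-line construction is missing from your proposal and must be supplied.
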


In Section $2$, we develop a variety of technical machinery.  In Section $3$, we give a full characterization of ``failing'' subsets $S\subseteq H_{n,m}$ with size $\exp(H_{n,m})\leq |S|<\mathfrak{g}(H_{n,m})$.  In Section $4$, we use the results of Sections $3$ and $4$ to prove the Main Theorem.  In Section $5$, we present further observations and topics for future inquiry.

\section{Auxiliary Results}

In this section, we discuss several technical results that will be of use in later sections.  We begin with a straightforward computation.

\begin{proposition}
For integers $n \geq 2$ and $m$, the exponent of $H_{n,m}$ is given by
$$
\exp(H_{n, m})=
\begin{cases}
n, & n \text{ even, } m \text{ even}\\
2n, &\text{otherwise.}
\end{cases}
$$
\label{exponent}
\end{proposition}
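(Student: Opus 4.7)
The plan is to enumerate the elements in the normal form $\{x^i, x^iy : 0\le i\le n-1\}$, compute each element's order directly, and then take the least common multiple over the two families.

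First I would handle the powers of $x$. Since $\operatorname{ord}(x^i) = n/\gcd(n,i)$, and $\operatorname{ord}(x) = n$, the contribution of the $x^i$ to $\exp(H_{n,m})$ is exactly $n$.

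Next, using the relation $yx = x^{-1}y$ inductively, one gets $yx^i = x^{-i}y$, and therefore
\[
(x^i y)^2 \;=\; x^i (y x^i) y \;=\; x^i x^{-i} y^2 \;=\; y^2 \;=\; x^m.
\]
Hence $(x^iy)^{2k} = x^{mk}$, and since $(x^iy)^{2k-1}$ always lies in the coset $H_{n,m}\setminus\langle x\rangle$ and cannot equal $1$, the order of $x^iy$ equals $2\cdot\operatorname{ord}(x^m) = 2n/\gcd(n,m)$, independent of $i$. So the contribution of the $x^iy$ to the exponent is $2n/\gcd(n,m)$, and
\[
\exp(H_{n,m}) \;=\; \operatorname{lcm}\!\left(n,\;\frac{2n}{\gcd(n,m)}\right).
\]

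Finally I would dispatch the two cases. If both $n$ and $m$ are even, then $\gcd(n,m)$ is even, so $2n/\gcd(n,m)$ divides $n$, and the lcm is $n$. In every remaining case ($n$ odd, or $n$ even and $m$ odd), the gcd $\gcd(n,m)$ is odd; then $2n/\gcd(n,m) = 2\cdot(n/\gcd(n,m))$ is an even divisor of $2n$ whose odd part divides $n$, so the lcm with $n$ is $2n$. Assembling the cases gives exactly the formula in the proposition.

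There is no real obstacle here: the only thing to be careful about is the short parity case analysis at the end, and the observation that $(x^iy)^2$ is independent of $i$, which is the key structural fact that makes the second family of orders uniform.
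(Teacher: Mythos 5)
Your proof is correct and follows essentially the same route as the paper's: compute $(x^iy)^2 = x^m$ to get $|x^iy| = 2n/\gcd(n,m)$ uniformly in $i$, then split on the parity of $\gcd(n,m)$. Your explicit justification that the order of $x^iy$ is even (odd powers lie outside $\langle x\rangle$) and your framing via the lcm are slightly more careful than the paper's, but the substance is identical.
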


\begin{proof}
Note that $|x|=n$ and $|x^a|$ divides $n$ for all $a$.  For any $0 \leq a \leq n-1$, we have $(x^ay)^2=x^ax^{-a}yy=x^m$, so $$|x^ay|=2|x^m|=\frac{2|x|}{\gcd(n, m)}=\frac{2n}{\gcd(n, m)}.$$  If both $n$ and $m$ are even, then $\gcd(n, m)$ is even and $|x^ay|=\frac{n}{\left(\frac{\gcd(n, m)}{2}\right)}$ divides $n$, which lets us conclude that $\exp(H_{n,m})=n$.  If $n$ and $m$ are not both even, then $\gcd(n,m)$ is odd and $\frac{2n}{\gcd(n, m)}$ contains one more factor of $2$ than $n$ does, which implies that $\exp(H_{n,m})=2n$.

\end{proof}

Balachandran, Mazumdar, and Zhao \cite{dihedral} present a result similar to the following lemma, but both their statement and their proof are incorrect.  We simultaneously rectify their errors and extend the lemma to the more general setting of metacyclic groups.  To the extent that only the second and third cases ($n$ even) will be used later in this paper, the first case is solely of independent interest.  Because the proof is long and the casework is somewhat tedious, we defer the proof to the Appendix.
\\

\begin{lemma}
Let $S=\{x^{\alpha_1}y, \dots, x^{\alpha_t}y\}\subset H_{n,m}$ where $0\leq \alpha_1<\dots <\alpha_t<n$.  We get the following bounds, along with equality conditions for sufficiently large $t$.

%$$|\prod_t(S)|\geq
%\begin{cases}
%t, &n \text{ odd}\\
%\frac{t}{2}, &n \text{ even}, t \text{ even}\\
%\frac{t+1}{2}, &n \text{ even}, t \text{ odd}.
%\end{cases}$$
%Moreover, equality occurs exactly under the following conditions, for sufficiently large $t$.
%$$\begin{cases}
%\{\alpha_1, \dots, \alpha_t\}=\{b+\frac{kn}{t} \mid 0\leq k \leq t-1\} &\text{$n$ odd ($t \geq 4$)}\\
%\text{ for some integer $0\leq b \leq \frac{n}{t}-1$ where $t$ divides $n$}, & \\
%\{\alpha_1, \dots, \alpha_t\}=\{b+\frac{kn}{t} \mid 0\leq k \leq t-1\} &\text{$n$ even, $t$ even ($t \geq 2$)}\\
%\text{ for some integer $0\leq b \leq \frac{n}{t}-1$ where $t$ divides $n$}, & \\
%\{\alpha_1, \dots, \alpha_t\}=\{b+\frac{kn}{t+1} \mid 0\leq k \leq t\}\setminus \{b+\frac{\ell n}{t+1}\} &\text{$n$ even, $t$ odd ($t \geq 5$)}.\\
%\text{for some integers $0\leq b \leq \frac{n}{t+1}-1$ and $0\leq \ell \leq t$ where $(t+1)$ divides $n$}, & 
%\end{cases}$$

\begin{itemize}
%\item Suppose $n$ is odd and $t$ is even.  Then $$|\prod_t(S)|\geq t,$$ with equality exactly when $t=2$.
\item Suppose $n$ is odd.  Then $$|\prod_t(S)|\geq t,$$ with equality (for $t \geq 4$) exactly when $t$ divides $n$ and $$\{\alpha_1, \dots, \alpha_t\}=\{b+\frac{kn}{t} \mid 0\leq k \leq t-1\}$$ for some integer $0\leq b \leq \frac{n}{t}-1$.
\item Suppose both $n$ and $t$ are even.  Then $$|\prod_t(S)|\geq \frac{t}{2},$$ with equality (for $t \geq 2$) exactly when $t$ divides $n$ and $$\{\alpha_1, \dots, \alpha_t\}=\{b+\frac{kn}{t} \mid 0\leq k \leq t-1\}$$ for some integer $0\leq b \leq \frac{n}{t}-1$.
\item Suppose $n$ is even and $t$ is odd.  Then $$|\prod_t(S)|\geq \frac{t+1}{2},$$ with equality (for $t \geq 5$) exactly when $t+1$ divides $n$ and $$\{\alpha_1, \dots, \alpha_t\}=\{b+\frac{kn}{t+1} \mid 0\leq k \leq t\}\setminus \{b+\frac{\ell n}{t+1}\}$$ for some integers $0\leq b \leq \frac{n}{t+1}-1$ and $0\leq \ell \leq t$.
\end{itemize}
\label{big lemma}
\end{lemma}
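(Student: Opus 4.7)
My first step is to explicitly compute the product of a permuted sequence from $S$. Using the relation $yx^a = x^{-a}y$ and induction on $t$, one shows that for any $\sigma \in S_t$,
\[
(x^{\alpha_{\sigma(1)}}y)(x^{\alpha_{\sigma(2)}}y)\cdots(x^{\alpha_{\sigma(t)}}y) = x^{c(\sigma) + \lfloor t/2 \rfloor m} \, y^{t \bmod 2},
\]
where $c(\sigma) = \sum_{i=1}^{t}(-1)^{i-1}\alpha_{\sigma(i)}$. Hence $|\prod_t(S)|$ is precisely the number of distinct values of $c(\sigma) \pmod{n}$. As $\sigma$ varies, the set of ``positive-sign'' indices is an arbitrary subset $T \subseteq \{1, \dots, t\}$ of size $k := \lceil t/2 \rceil$, and writing $\Sigma = \sum_i \alpha_i$ we have $c(\sigma) = 2\sum_{i\in T}\alpha_i - \Sigma$. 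Consequently, $|\prod_t(S)|$ equals the number of distinct $k$-subset sums of $A := \{\alpha_1,\ldots,\alpha_t\}$ taken modulo $n$ (when $n$ is odd) or modulo $n/2$ (when $n$ is even).

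\textbf{Lower bound and equality.} The problem is now the classical one of counting $k$-subset sums in a cyclic group $\mathbb{Z}/N\mathbb{Z}$. The Dias da Silva--Hamidoune theorem (in its cyclic-group form) yields a lower bound of $\min(N, k(t-k)+1)$ for this count, and since $k(t-k)+1$ comfortably exceeds $t$, $t/2$, or $(t+1)/2$ in the relevant ranges, the inequality $|\prod_t(S)| \geq$ (the claimed bound) reduces to verifying $N \geq$ (the claimed bound), which follows from having $t$ distinct $\alpha_i$'s in $\{0,1,\dots,n-1\}$. For the equality claim, saturation at the lemma's bound forces the set of $k$-subset sums to be a full coset of the unique subgroup $H \leq \mathbb{Z}/N\mathbb{Z}$ whose order equals that bound. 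A swap argument---exchanging $a \in T$ for $a' \notin T$ shifts the sum by $a - a'$, which must remain in the coset---then forces $a - a' \in H$ for all $a, a' \in A$. Consequently $A$ lies in a single coset of $H$, which forces $A$ to be the AP specified in the statement.

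\textbf{Main obstacle.} The principal difficulty lies in the equality characterization rather than the inequality. Translating ``$|\prod_t(S)|$ attains the lemma's bound'' into the structural conclusion ``the $k$-subset sums form a coset of a subgroup'' requires a restricted-sumset Kneser-type statement, and each of the three cases contains its own technical wrinkle. In the even-$n$, even-$t$ case, reducing the $\alpha_i$'s modulo $n/2$ can cause collisions, so one must really count sums over a multiset. In the even-$n$, odd-$t$ case, the target AP has $t+1$ terms but $A$ has only $t$, so one must identify and track the ``missing'' element throughout the argument. The cumulative casework---including base cases for small $t$---is what makes the proof long and warrants its deferral to the appendix.
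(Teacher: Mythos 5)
Your reduction to subset sums is correct and is a genuinely different framing from the paper (which works directly with the chain of differences $\alpha_{\frac{t}{2}+j}-\alpha_i$ produced by swapping factors), but as written the proof has two real gaps. First, the Dias da Silva--Hamidoune theorem gives the bound $\min(p,\,k(t-k)+1)$ only over $\mathbb{Z}/p\mathbb{Z}$ with $p$ prime (its proof is the polynomial method over a field); it is false in general cyclic groups. For instance, $A=\{0,1,6,7\}\subset\mathbb{Z}/12\mathbb{Z}$ with $k=2$ has $2$-subset sums $\{1,6,7,8\}$ of size $4<\min(12,5)$. So your route to even the lower bound is broken for composite $N$ — which is exactly the regime of interest, since $N=n$ or $N=n/2$ is arbitrary. (The lemma's much weaker bounds $t$, $\frac{t}{2}$, $\frac{t+1}{2}$ do hold, but by the elementary observation that the $t$ differences $0<\alpha_{k+1}-\alpha_k<\dots<\alpha_t-\alpha_1<n$ are distinct and doubling them preserves at least $\lceil t/2\rceil$ residues when $n$ is even; nothing as strong as $k(t-k)+1$ is available or needed.)

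Second, and more seriously, the entire equality analysis rests on the assertion that saturation forces the set of $k$-subset sums to be a coset of the subgroup of order equal to the bound. This is precisely the hard structural content of the lemma, and you do not prove it — you name it as "the main obstacle" and stop. Kneser's theorem does not apply to restricted (distinct-element) subset sums; one would need something like the DeVos--Goddyn--Mohar generalization, and even then extracting your coset statement requires work. Note also that "the unique subgroup $H\leq\mathbb{Z}/N\mathbb{Z}$ whose order equals that bound" exists only if the bound divides $N$, i.e., only if $t\mid n$ (resp.\ $t+1\mid n$) — which is part of what must be proven, so the argument as phrased is circular. Your concluding swap argument (all pairwise differences of $A$ lie in $H$, hence $A$ sits in one coset, hence $A$ is the stated AP) is fine once the coset claim is granted, but without a proof of that claim the characterization of equality — the bulk of the lemma and of the paper's appendix — is missing.
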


\textbf{Remarks.}
\begin{enumerate}
\item We may express the two bounds for even $n$ together as $|\prod_t(S)|\geq \lceil \frac{t}{2} \rceil$.
%\item In the case of $n$ odd, the equality condition applies only for $t\geq 4$.  It is false for $t=0$ and vacuously true for $t=1$.  This condition is in fact unnecessary for $t=2$ and $t=3$ since equality is always obtained.  In the case of $n$ and $t$ even, the equality condition is false for $t=0$.  In the case of $n$ even and $t$ odd, 
\item A close examination of the proof reveals that when the equality conditions are satisfied, $\prod_t(S)$ has the following very specific forms.  For $n$ odd (and hence also $t$ odd, since $t$ divides $n$), $$\prod_t(S)=\{x^{\alpha+\frac{kn}{t}}y \mid 0\leq k \leq t-1\}$$ where $\alpha=(\alpha_{\frac{t-1}{2}+1}+\alpha_{\frac{t-1}{2}+2}+\dots +\alpha_t)-(\alpha_1+\alpha_2+\dots +\alpha_{\frac{t-1}{2}})+(\frac{t-1}{2})m$.  For $n$ and $t$ both even, $$\prod_t(S)=\{x^{\alpha+\frac{2kn}{t}} \mid 0\leq k \leq \frac{t}{2}-1\}$$ where $\alpha=(\alpha_{\frac{t}{2}+1}+\alpha_{\frac{t}{2}+2}+\dots +\alpha_t)-(\alpha_1+\alpha_2+\dots +\alpha_{\frac{t}{2}})+(\frac{t}{2})m$.  Finally, for $n$ even and $t$ odd, $$\prod_t(S)=\{x^{\alpha+\frac{2kn}{t+1}}y \mid 0\leq k \leq \frac{t+1}{2}-1\}$$ where $\alpha=(\alpha_{\frac{t-1}{2}+1}+\alpha_{\frac{t-1}{2}+2}+\dots +\alpha_t)-(\alpha_1+\alpha_2+\dots +\alpha_{\frac{t-1}{2}})+(\frac{t-1}{2})m$.
\item In the all but the last case, equality occurs when the $\alpha_i$'s form an arithmetic sequence that ``fills'' $\mathbb{Z}/n\mathbb{Z}$ evenly.  In the last case, equality occurs when the $\alpha_i$'s form such a sequence with exactly one element missing.
\item In the first case, the bound cannot be sharp unless either $t=2$, $t=3$, or $t$ divides $n$.  In the second case, the bound cannot be sharp unless $t$ divides $n$.  In the third case, the bound cannot be sharp unless either $t=3$, $t=4$, or $t+1$ divides $n$.  This property will be useful later.
\item Consider the product $(x^{\beta_1}) \cdots (x^{\beta_s})(x^{\alpha_i}y)$.  If any term $x^{\beta_j}$ is moved to the right of $x^{\alpha_i}y$, then it contributes $x^{-\beta_j}$ to the product instead of $x^{\beta_j}$.  With this in mind, define the set of plus-minus weighted $s$-fold products $$\prod^{\pm}_s(\{x^{\beta_1}, \dots, x^{\beta_s}\})=\{x^{\pm\beta_1 \pm \beta_2 \pm \dots \pm \beta_s}\}$$ where all of the signs on the right-hand side are chosen independently.  In particular, we can choose to negate any $\lfloor \frac{s}{2} \rfloor$ of the terms, so (as is clear from the proof in the Appendix) the lower bounds of Lemma \ref{big lemma} also apply to $| \prod^{\pm}_s(\{x^{\beta_1}, \dots, x^{\beta_s}\})|$.  We leave it to the reader to verify that $b=0$, $b \in \{0, \frac{n}{2t}\}$, $b \in \{0, \frac{n}{2(t+1)}\}$ must be added to the equality conditions in the first, second, and third cases, respectively.  
\end{enumerate}

The following lemma generalizes a well-known result that appears in \cite{dihedral} and \cite{abelian}, among other places.

\begin{lemma}
Let $G$ be a group with finite normal subgroup $N$, and suppose $g_1N$ and $g_2N$ are two (possibly identical) cosets of $N$ in $G$.  If $A \subseteq g_1N$ and $B \subseteq g_2N$ satisfy $|A|+|B|\geq |N|+1$, then $A\cdot B=g_1g_2N$, where $A\cdot B=\{ab \mid a\in A, b \in B\}$.
\label{group}
\end{lemma}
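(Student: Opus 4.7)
The plan is to reduce the statement to the classical pigeonhole fact that if $A'', B' \subseteq N$ satisfy $|A''| + |B'| \geq |N| + 1$, then $A'' \cdot B' = N$, and then use normality of $N$ to pass from $N$ to the cosets $g_1 N$ and $g_2 N$.

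First, I would write $A = g_1 A'$ and $B = g_2 B'$ with $A', B' \subseteq N$, noting $|A'| = |A|$ and $|B'| = |B|$. To analyze products, I would rewrite
\[
A \cdot B = g_1 A' g_2 B' = g_1 g_2 (g_2^{-1} A' g_2) B' = g_1 g_2 \, A'' B',
\]
where $A'' := g_2^{-1} A' g_2$. Since $N$ is normal in $G$, conjugation by $g_2^{-1}$ sends $N$ into itself, so $A'' \subseteq N$ and $|A''| = |A'|$. Thus it suffices to prove that if $A'', B' \subseteq N$ with $|A''| + |B'| \geq |N|+1$, then $A'' B' = N$, since this will yield $A \cdot B = g_1 g_2 N$.

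For the reduced claim, fix an arbitrary $c \in N$ and consider the two subsets $A'' \subseteq N$ and $c (B')^{-1} := \{c b^{-1} : b \in B'\} \subseteq N$. Since inversion followed by translation by $c$ is a bijection of $N$, we have $|c(B')^{-1}| = |B'|$, and therefore
\[
|A''| + |c(B')^{-1}| = |A''| + |B'| \geq |N| + 1 > |N|.
\]
By pigeonhole, these two subsets of $N$ must share a common element: some $a'' \in A''$ equals $c b^{-1}$ for some $b \in B'$. Then $a'' b = c$, exhibiting $c \in A'' B'$. Since $c$ was arbitrary, $A'' B' = N$, which completes the proof.

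There is no real obstacle here; the only subtle point is that a direct pigeonhole on $A$ and $B$ themselves does not immediately apply because multiplication in $G$ is noncommutative, so one must first conjugate $A'$ by $g_2$ to land inside a single copy of $N$ where the standard abelian-style argument works. Normality of $N$ is precisely what makes this conjugation land back in $N$.
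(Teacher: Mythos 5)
Your proof is correct and is essentially the same argument as the paper's: both rest on the pigeonhole observation that two subsets of a set of size $|N|$ whose cardinalities sum to at least $|N|+1$ must intersect. The only cosmetic difference is that you first conjugate everything into $N$ and apply the classical statement there, while the paper runs the same intersection argument directly inside the coset $g_1N$ by comparing $A$ with $\{g_1hg_2b^{-1} \mid b \in B\}$.
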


\begin{proof}
Because $N$ is normal, we have $g_2N=Ng_2$ and $g_1g_2N=g_1Ng_2$, and we can write $B=\{b_1g_2, \dots, b_{|B|}g_2\}$ where each $b_i$ is in $N$.  It is clear that $A\cdot B \subseteq g_1g_2N$.  For the other inclusion, fix any $h\in N$, and we will show that $g_1hg_2 \in A \cdot B$.  For any $b=b_ig_2\in B$, we see that $g_1hg_2b^{-1}=g_1hb_i^{-1}\in g_1N$.  Since $|\{g_1hg_2b^{-1} \mid b \in B\}|=|B|$, we conclude that $\{g_1hg_2b^{-1} \mid b \in B\} \cap A$ is nonempty.  So there exist $a\in A$ and $b \in B$ such that $g_1hg_2b^{-1}=a$ and $g_1hg_2=ab$.  Hence, $A\cdot B=g_1g_2N$.
\end{proof}

\section{Characterizing Failing Subsets for $n$ and $m$ Even}

When $\mathfrak{g}(G)$ is strictly larger than $\exp(G)$, a natural ``inverse'' to the Harborth problem is the question of characterizing the subsets $S\subseteq G$ of size $\exp(G) \leq |S|<\mathfrak{g}(G)$ that ``fail'' the Harborth condition.  More formally, we say that a subset $S \subseteq G$ \textit{fails} if $\exp(G) \leq |S|<\mathfrak{g}(G)$ and $S$ does not contain $\exp(G)$ distinct elements whose product is $1$, i.e., $1 \notin \prod_{\exp(G)}(S)$.  We say that $S \subseteq G$ of size $\exp(G) \leq |S|<\mathfrak{g}(G)$ \textit{passes} if it does not fail.  (See \cite{inverse} for a discussion of this concept for $G=C_2 \oplus C_{2n}$.)  In this section, we characterize all failing subsets of $H_{n, m}$.
\\

Recall that $\exp(H_{n,m})=2n=|H_{n,m}|$ when $n$ and $m$ are not both even.  According to the Main Theorem, $\mathfrak{g}(H_{n,m})=2n$ when $n \equiv 0 \pmod{4}$ and $m$ is odd, so there are no failing subsets in this case.  In all other cases where $n$ and $m$ are not both even, $\mathfrak{g}(H_{n,m})=2n+1$, which means that $H_{n,m}$ is the only failing subset.  Thus, the problem of characterizing failing subsets is interesting only when $n$ and $m$ are both even.  Here, $\exp(H_{n,m})=n$ and $\mathfrak{g}(H_{n,m})=n+2$ (for $n\neq 2$) indicate that we are interested in failing subsets of sizes $n$ and $n+1$.  (We address failing subsets of $H_{2,m}$ of size $4$ in Remark $2$ following Corollary \ref{nfailcor}.)

\begin{theorem}
Let $n \geq 2$ and $m$ be even integers, and let $S=\{x^{\beta_1}, \dots, x^{\beta_s}, x^{\alpha_1}y, \dots, x^{\alpha_t}y\} \subset H_{n,m}$ satisfy $|S|=s+t=n$.  Then $S$ fails if and only if it has one of the following five forms:

\begin{enumerate}
\item $s$ and $t$ are odd.\\
\item $s$ and $t$ are even, and $(\beta_1+\dots +\beta_s)+(\alpha_1+\dots +\alpha_t)$ is odd.\\
\item $n \equiv 0 \pmod{4}$, and $s=n$ and $t=0$.\\
\item $n \equiv 4 \pmod{8}$ and $m \equiv 2 \pmod{4}$, and $$S\in \{\{1, x^2, \dots, x^{n-2}, y, x^2y, \dots, x^{n-2}y\}, \{1, x^2, \dots, x^{n-2}, xy, x^3y, \dots, x^{n-1}y\}\}.$$
\item $n \equiv 4 \pmod{8}$ and $m \equiv 0 \pmod{4}$, and $$S\in \{\{x, x^3, \dots, x^{n-1}, y, x^2y, \dots, x^{n-2}y\}, \{x, x^3, \dots, x^{n-1}, xy, x^3y, \dots, x^{n-1}y\}\}.$$
\end{enumerate}
\label{n fail}
\end{theorem}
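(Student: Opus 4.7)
My plan is to reduce $1 \in \prod_n(S)$ to a numerical congruence on the exponents and then dispatch the resulting cases.

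Split $S$ as $A \sqcup yB$ with $A = \{x^{\beta_1}, \ldots, x^{\beta_s}\}$ and $yB = \{x^{\alpha_1}y, \ldots, x^{\alpha_t}y\}$, $s + t = n$. Pushing every $y$ in a given ordered product of all of $S$ to the right and using $y^2 = x^m$, one finds that when $t$ is even the resulting element of $\langle x\rangle$ has exponent
\[
E \;\equiv\; \sum_{j=1}^{s} \epsilon_j \beta_j \;+\; \sum_{k=1}^{t}(-1)^{k+1}\alpha_{\sigma(k)} \;+\; \tfrac{t}{2}m \pmod{n}.
\]
A short check shows $(\epsilon_j) \in \{\pm 1\}^{s}$ and the permutation $\sigma$ of $[t]$ can be chosen \emph{independently}: each $\epsilon_j$ is realized by placing $x^{\beta_j}$ between an even or odd number of $y$-factors, and each $\sigma$ by reordering the $y$-type elements. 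Hence $1 \in \prod_n(S)$ iff the congruence $E \equiv 0 \pmod{n}$ admits a solution in $(\epsilon,\sigma)$.

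The first three cases fall out immediately. If $t$ (and hence $s$) is odd, every $n$-fold product lies in the coset $\langle x\rangle y$, so $1 \notin \prod_n(S)$; this is case (1). If $s, t$ are even, then $E \bmod 2$ is constant and equals $\Sigma := \sum \beta_j + \sum \alpha_i \pmod{2}$, yielding case (2). Finally, if $s = n$ and $t = 0$ then $\prod_n(S) = \{x^{n(n-1)/2}\} = \{x^{n/2}\} \neq \{1\}$; this produces case (3) for $n \equiv 0 \pmod{4}$, the $n \equiv 2 \pmod{4}$ subcase being absorbed by case (2).

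The main work lies in the regime $s, t$ both even with $0 < s < n$ and $\Sigma$ even. Since $(t/2)m$ is then even, $E \equiv 0 \pmod{n}$ is equivalent to
\[
T \;:=\; \sum_{j \in J_\beta} \beta_j + \sum_{i \in J_\alpha} \alpha_i \;\equiv\; \tfrac{1}{2}\bigl(\Sigma + \tfrac{t}{2}m\bigr) \pmod{n/2},
\]
for some $J_\beta \subseteq [s]$ and some $J_\alpha \subseteq [t]$ with $|J_\alpha| = t/2$. Applying Lemma \ref{big lemma} (via the plus-minus version in Remark 5) and Lemma \ref{group}, I would argue that unless the $\beta_i$'s and the $\alpha_j$'s both form rigid arithmetic progressions of fixed parity classes, the reachable set of $T$ exhausts $\mathbb{Z}/(n/2)\mathbb{Z}$ and $S$ passes. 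I would then enumerate the surviving rigid configurations and, via a direct congruence calculation tracking $n \bmod 8$ and $m \bmod 4$, match them against the target residue to isolate the failing families listed in cases (4) and (5). The main obstacle is precisely this last step: the rigidity analysis and the multi-modulus bookkeeping are delicate, and the edge cases $s = 0$ and $n = 2$ will need to be handled separately.
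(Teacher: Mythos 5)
Your strategy is the same as the paper's: reduce to parity constraints for characterizations (1)--(3), and then, in the regime $s,t$ even with $\Sigma$ even, combine the lower bounds of Lemma \ref{big lemma} (in its plus-minus form for the $\beta$'s) with Lemma \ref{group} to show that $\prod_n(S)$ covers all of $\{1,x^2,\dots,x^{n-2}\}$ unless both factors sit at the equality threshold. The setup is sound: the independence of the sign vector $\epsilon$ and the ordering $\sigma$ holds whenever $t\geq 2$, and your treatment of (1)--(3) is correct.

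The genuine gap is that the entire content of characterizations (4) and (5) --- which is the nontrivial part of the theorem --- is announced as a plan rather than carried out. To close it you need two concrete ingredients. First, the rigidity step: by Remark 4 after Lemma \ref{big lemma}, equality $|\prod^{\pm}_s|=\tfrac{s}{2}$ and $|\prod_t|=\tfrac{t}{2}$ forces $s\mid n$ and $t\mid n$; combined with $s+t=n$, $s,t$ even and $0<s,t<n$, this pins down $s=t=\tfrac{n}{2}$ (hence $n\equiv 0\pmod 4$) and forces each of $\{\beta_j\}$, $\{\alpha_i\}$ to be one of the two parity classes $\{0,2,\dots,n-2\}$, $\{1,3,\dots,n-1\}$ (using Remarks 2 and 5 for the exact shape and the admissible offsets $b$). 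Second, the explicit evaluation of $\prod^{\pm}_{n/2}(\{1,x^2,\dots,x^{n-2}\})$, $\prod^{\pm}_{n/2}(\{x,x^3,\dots,x^{n-1}\})$, and $\prod_{n/2}(\{y,x^2y,\dots\})$ as functions of $n\bmod 8$ and $m\bmod 4$, which is what separates the two failing families from the passing combinations; without it neither the sufficiency nor the necessity of (4) and (5) is established. You also leave the case $s=0$, $t=n$, $n\equiv 0\pmod 4$ unhandled (there Lemma \ref{big lemma} alone gives $|\prod_n(S)|\geq \tfrac{n}{2}$, forcing $1\in\prod_n(S)$), and note a sign slip in your target congruence: from $E=2T-\Sigma+\tfrac{t}{2}m$ one gets $T\equiv\tfrac{1}{2}\bigl(\Sigma-\tfrac{t}{2}m\bigr)\pmod{n/2}$, not $+\tfrac{t}{2}m$; this would corrupt the very ``multi-modulus bookkeeping'' you defer.
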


\begin{proof}
We will first show that the each of these five characterizations is sufficient for $S$ to be failing.  We will then show that if $S$ satisfies none of these characterizations, then some re-ordering of the elements of $S$ yields a product of $1$.
\\

Since re-ordering the elements does not change the parity of the powers of $x$ and $y$ in a product, the first charcterization is clearly sufficient for $S$ to be failing.  The same idea shows the sufficiency of the second characterization once one notes that $(\beta_1+\dots +\beta_s)+(\alpha_1-\dots +(-1)^{t+1}\alpha_t)+(\frac{t}{2})m \equiv (\beta_1+\dots +\beta_s)+(\alpha_1+\dots +\alpha_t) \pmod{2}$.
\\

The sufficiency of the third characterization follows from $$(1)(x)(x^2) \cdots (x^{n-1})=x^{\frac{n(n-1)}{2}}=x^{\frac{n}{2}}\neq 1.$$
(Note that $s=n$ and $t=0$ is a special case of the second characterization when $n \equiv 2 \pmod{4}$.)
\\

(In a similar vein, we can use $(x^{a+b}y)(x^a y)=x^{a+b}x^{-a} y y= x^{m+b}$ to compute $$(x^{n-1}y)(x^{n-2}y)\cdots (xy)(y)=(x^{m+1})^{\frac{n}{2}}=x^{(\frac{n}{2})(m+1)},$$ where $(\frac{n}{2})(m+1)$ is odd if $n \equiv 2 \pmod{4}$ and even if $n \equiv 0 \pmod{4}$.  So $s=0$ and $t=n$ is a special case of the second characterization when $n \equiv 2 \pmod{4}$.)
\\

For the fourth and fifth characterizations, consider all cases where $s=t=\frac{n}{2}$ (for $n \equiv 0 \pmod{4}$, of course) and $$\{\beta_1, \dots, \beta_s\}, \{\alpha_1, \dots, \alpha_t\} \in \{\{0, 2, 4, \dots, n-2\}, \{1, 3, \dots, n-1\}\}.$$
First, compute $$(1)(x^2)(x^4)\dots (x^{n-2})=x^{\frac{(n-2)n}{4}}=x^{(\frac{n}{4})n-\frac{n}{2}}=x^{\frac{n}{2}}.$$
 
By Remarks $2$ and $5$ following Lemma \ref{big lemma}, we have
$$\prod_{\frac{n}{2}}^{\pm}(\{1, x^2, \dots, x^{n-2}\})=
\begin{cases}
\{1, x^4, x^8, \dots, x^{n-4}\}, &n \equiv 0 \pmod{8}\\
\{x^2, x^6, x^{10}, \dots, x^{n-2}\}, &n \equiv 4 \pmod{8}.
\end{cases}$$
Also, since $$(x)(x^3)(x^5)\cdots (x^{n-1})=x^{\frac{n^2}{4}}=x^{(\frac{n}{4})n}=1,$$ the same reasoning tells us that 
$$\prod_{\frac{n}{2}}^{\pm}(\{x, x^3, \dots, x^{n-1}\})=\{1, x^4, x^8, \dots, x^{n-4}\}$$ for all $n$.
We can also compute $$(x^{n-2}y)(x^{n-4}y) \dots (y)=(x^{n-1}y)(x^{n-3}y)\dots (xy)=x^{(\frac{n}{4})(m+2)},$$
where $$(\frac{n}{4})(m+2) \equiv
\begin{cases}
2 \pmod{4}, &m \equiv 0 \pmod{4}, n \equiv 4 \pmod{8}\\
0 \pmod{4}, &\text{otherwise}.
\end{cases}$$
Thus, $$\prod_{\frac{n}{2}}(\{y, \dots, x^{n-2}y\})=\prod_{\frac{n}{2}}(\{xy, \dots, x^{n-1}y\})=
\begin{cases}
\{x^2, x^6, \dots, x^{n-2}\}, &m \equiv 0 \pmod{4}, n \equiv 4 \pmod{8}\\
\{1, x^4, \dots, x^{n-4}\}, &\text{otherwise}.
\end{cases}$$
Recall that $$\prod_n(S)=\prod_s^{\pm}(\{x^{\beta_1}, \dots, x^{\beta_s}\})\cdot\prod_t(\{x^{\alpha_1}y, \dots, x^{\alpha_t}y\}),$$
with set ``multiplication'' as described in Lemma \ref{group}.
It is now easy to verify that $$\prod_n(S)=
\begin{cases}
\{x^2, x^6, \dots, x^{n-2}\}, &\text{in the fourth and fifth characterizations}\\
\{1, x^4, \dots, x^{n-4}\}, &\text{otherwise}.
\end{cases}$$
Finally, $1 \notin \{x^2, x^6, \dots, x^{n-2}\}$ establishes the sufficiency of the fourth and fifth characterizations.
\\

Now, suppose that $S$ does not satisfy any of these five characterizations.  From the first two, we get  that $s$ and $t$ are even and that $$\prod_n(S) \subseteq \{1, x^2, x^4, \dots, x^{n-2}\},$$ where the set on the right-hand side has size $\frac{n}{2}$.
\\

First, consider the case where $n \equiv 0 \pmod{4}$, and $s=0$ and $t=n$. By Lemma \ref{big lemma}, $|\prod_n(S)| \geq \frac{n}{2}$, from which we can conclude that in fact $\prod_n(S)= \{1, x^2, x^4, \dots, x^{n-2}\}$ and hence $1 \in \prod_n(S)$.  The second and third characterizations eliminate all other cases where $\{s,t\}=\{0,n\}$.  (See the parentheticals on the previous page.),
\\

We now restrict our attention to $0<s,t<n$.  Recall that re-ordering the elements in a  product does not change the parity of the power of $x$.  From Lemma \ref{big lemma},
$$ |\prod^{\pm}_s(\{x^{\beta_1}, \dots, x^{\beta_s}\})|+|\prod_t(\{x^{\alpha_1}y, \dots, x^{\alpha_t}y\})| \geq \frac{s}{2}+\frac{t}{2}=\frac{n}{2}.$$
If $$ |\prod^{\pm}_s(\{x^{\beta_1}, \dots, x^{\beta_s}\})|+|\prod_t(\{x^{\alpha_1}y, \dots, x^{\alpha_t}y\})| \geq \frac{n}{2}+1,$$ then Lemma \ref{group} (applied to cosets of $\{1, x^2, x^4, \dots, x^{n-2}\}$) tells us that $$\prod_n(S)= \prod^{\pm}_s(\{x^{\beta_1}, \dots, x^{\beta_s}\})\cdot\prod_t(\{x^{\alpha_1}y, \dots, x^{\alpha_t}y\})=\{1, x^2, x^4, \dots, x^{n-2}\}$$ and hence $1 \in \prod_n(S)$.  So we need to worry about only the equality case of Lemma \ref{big lemma} where $$ |\prod^{\pm}_s(\{x^{\beta_1}, \dots, x^{\beta_s}\})|=\frac{s}{2} \quad \text{and} \quad |\prod_t(\{x^{\alpha_1}y, \dots, x^{\alpha_t}y\})|=\frac{t}{2}.$$
From Remark $4$ after Lemma \ref{big lemma}, we know that this can happen only when both $s$ and $t$ divide $n$.  So the only possibility is $s=t=\frac{n}{2}$ (where $n \equiv 0 \pmod{4}$ since $s$ and $t$ are even).
We thus have $$\{\beta_1, \dots, \beta_s\}, \{\alpha_1, \dots, \alpha_t\} \in \{\{0, 2, 4, \dots, n-2\}, \{1, 3, \dots, n-1\}\}.$$
As discussed above, the fourth and fifth characterizations deal with all the ways for $S$ to fail in this case.  So we can conclude that $S$ passes, and the five characterizations are necessary as well as suficient for $S$ to fail.
\end{proof}

This theorem also lets us precisely count the failing subsets of size $n$.

\begin{corollary}
Let $n \geq 2$ and $m$ be even integers.  Then
$$\#\{\text{failing $S\subset H_{n,m}$ of size $n$}\}=
\begin{cases}
\frac{3}{4}\binom{2n}{n}+\frac{3}{4}\binom{n}{\frac{n}{2}}, &n \equiv 2 \pmod{4}\\
\frac{3}{4}\binom{2n}{n}-\frac{3}{4}\binom{n}{\frac{n}{2}}+1, &n \equiv 0 \pmod{8}\\
\frac{3}{4}\binom{2n}{n}-\frac{3}{4}\binom{n}{\frac{n}{2}}+3, &n \equiv 4 \pmod{8}
\end{cases}$$

Moreover, for any fixed $m$, the probability that $S$ fails satisfies $$\lim_{n \to \infty} \mathbb{P}(S \text{ fails})=\frac{3}{4}$$ if the subsets of size $n$ of each $H_{n,m}$ are chosen uniformly at random.
\label{nfailcor}
\label{n corollary}
\end{corollary}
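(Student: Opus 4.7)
The plan is to enumerate failing subsets by summing the contributions of the five mutually exclusive characterizations in Theorem~\ref{n fail}. Write $s$ for the number of ``rotations'' $x^{\beta_i}$ and $t=n-s$ for the number of ``reflections'' $x^{\alpha_j}y$ in $S$, so the $\beta$-set and $\alpha$-set can be chosen independently from $\{0,1,\dots,n-1\}$.

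For Characterization~(1), I would compute $C_1=\sum_{s\text{ odd}}\binom{n}{s}\binom{n}{n-s}$. Comparing coefficients of $x^n$ in $(1+x)^{2n}$ and in $(1-x^2)^n=(1+x)^n(1-x)^n$ gives $\sum_s\binom{n}{s}\binom{n}{n-s}=\binom{2n}{n}$ and $\sum_s(-1)^s\binom{n}{s}\binom{n}{n-s}=(-1)^{n/2}\binom{n}{n/2}$. Subtracting and halving yields $C_1=\tfrac12\binom{2n}{n}-\tfrac12(-1)^{n/2}\binom{n}{n/2}$.

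For Characterization~(2), I need pairs with $s,t$ both even where exactly one of $\sum\beta_i,\sum\alpha_j$ is odd. Substituting $x\mapsto -1$ in $\prod_{i=0}^{n-1}(1+zx^i)$ yields $(1-z^2)^{n/2}$, so the count of even-sum $s$-subsets minus that of odd-sum $s$-subsets equals $[z^s](1-z^2)^{n/2}=(-1)^{s/2}\binom{n/2}{s/2}$ for even $s$. Combined with $\binom{n}{s}$ for the total, this gives closed forms $f(s)$ and $g(s)$ for even- and odd-sum counts; the identity $f(s)g(t)+g(s)f(t)=\tfrac12[(f+g)_s(f+g)_t-(f-g)_s(f-g)_t]$ together with Vandermonde's $\sum_j\binom{n/2}{j}\binom{n/2}{n/2-j}=\binom{n}{n/2}$ then gives $C_2=\tfrac14\binom{2n}{n}-\tfrac14(-1)^{n/2}\binom{n}{n/2}$, so
$$C_1+C_2=\tfrac34\binom{2n}{n}-\tfrac34(-1)^{n/2}\binom{n}{n/2}.$$

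When $n\equiv 2\pmod 4$, $(-1)^{n/2}=-1$ and Characterizations~(3)--(5) are vacuous, producing the first case of the formula. When $n\equiv 0\pmod 4$, $(-1)^{n/2}=1$ and I would add the sporadic contributions: Characterization~(3) gives one subset, and when $n\equiv 4\pmod 8$ exactly one of~(4) or~(5) applies and contributes two more. I would verify by a direct parity computation that every such sporadic subset has $s,t$ both even with $\sum\beta_i+\sum\alpha_j$ even, so none are already counted in~(1) or~(2); this gives the $+1$ and $+3$ adjustments. Finally, for the limit, Stirling's approximation gives $\binom{n}{n/2}\sim 2^n\sqrt{2/(\pi n)}$ and $\binom{2n}{n}\sim 4^n/\sqrt{\pi n}$, so $\binom{n}{n/2}/\binom{2n}{n}=\Theta(2^{-n})\to 0$; dividing the failing count by $\binom{2n}{n}$ yields $\mathbb{P}(S\text{ fails})\to 3/4$. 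The main technical step is the parity bookkeeping in~(2), but the substitution $x\mapsto -1$ makes it routine.
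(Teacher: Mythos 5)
Your proposal is correct and uses the same overall decomposition as the paper (sum the contributions of the five characterizations of Theorem \ref{n fail}, check that the sporadic subsets in characterizations (3)--(5) are disjoint from (1) and (2) because their total $x$-exponent sum is even, then apply Stirling), but the core counting step is carried out by a genuinely different technique. The paper computes the counts for characterizations (1) and (2) bijectively: it isolates the $\binom{n}{n/2}$ ``diagonal'' subsets (those with $\{\alpha_i\}=\{\beta_j\}$, resp.\ those pairing $2k-1$ with $2k$), determines their parity behavior directly, and then exhibits an explicit involution on the remaining subsets (moving the smallest distinguishing element between the $\alpha$-set and the $\beta$-set, resp.\ swapping $2k-1$ and $2k$) that flips the relevant parity, so exactly half of the non-diagonal subsets fail each condition. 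You instead extract the same quantities algebraically via sign filters: the identity $\sum_s(-1)^s\binom{n}{s}\binom{n}{n-s}=(-1)^{n/2}\binom{n}{n/2}$ from $(1+x)^n(1-x)^n=(1-x^2)^n$ for characterization (1), and the substitution $x\mapsto-1$ in $\prod_{i=0}^{n-1}(1+zx^i)=(1-z^2)^{n/2}$ together with Vandermonde for characterization (2). I verified that your closed forms $C_1=\tfrac12\binom{2n}{n}-\tfrac12(-1)^{n/2}\binom{n}{n/2}$ and $C_2=\tfrac14\binom{2n}{n}-\tfrac14(-1)^{n/2}\binom{n}{n/2}$ agree with the paper's case-by-case values, and your disjointness check for the sporadic subsets is exactly the right thing to do (and does hold: all of them have $s,t$ even and even exponent sum). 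The paper's involutions are more elementary and make the appearance of $\binom{n}{n/2}$ combinatorially transparent; your generating-function route is more mechanical and would generalize more readily if the parity conditions were replaced by congruences modulo a larger integer.
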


\textbf{Remarks.}
\begin{enumerate}
\item This corollary contrasts with the case of a product of $n$ distinct elements of $ H_{n,m}$ chosen uniformly at random.  For any choice of $h_1, \dots, h_{n-1}$, at most $1$ choice for $h_n$ out of the remaining $(n+1)$ elements makes $h_1h_2 \dots h_n=1$, so we get $\lim_{n \to \infty} \mathbb{P}(h_1h_2\dots h_n \neq 1)=1$.
\item When $n=2$, there are $\frac{3}{4}\binom{4}{2}+\frac{3}{4}\binom{2}{1}=\frac{3}{4}(6)+\frac{3}{4}(2)=6$ failing subsets of size $2$.  In fact, since $\binom{4}{2}=6$, all subsets of $H_{2, m}$ of size $2$, $3$, and $4$ fail, which immediately implies that $\mathfrak{g}(H_{2, m})=|H_{2,m}|+1=4+1=5$.
\end{enumerate}

\begin{proof}
Fix some $n$ and $m$.  We begin by counting the subsets that fit the first characterization.  If $\{\alpha_1, \dots, \alpha_t\}=\{\beta_1, \dots, \beta_s\}$, then $s$ and $t$ are odd when $n \equiv 2 \pmod{4}$ and even when $n \equiv 0 \pmod{4}$.  Either way, there are $\binom{n}{\frac{n}{2}}$ such subsets.  Now consider the $\binom{2n}{n}-\binom{n}{\frac{n}{2}}$ subsets for which $\{\alpha_1, \dots, \alpha_t\}\neq \{\beta_1, \dots, \beta_s\}$.  I claim that exactly half of these subsets have $s$ and $t$ odd, for there is a bijection between the subsets where $s$ and $t$ are odd and the subsets where $s$ and $t$ are even.  Fix any such subset $S$.  Since $\{\alpha_1, \dots, \alpha_t\}\neq \{\beta_1, \dots, \beta_s\}$, there exists some smallest integer $1\leq k \leq n$ such that $k$ is in exactly one of $\{\alpha_1, \dots, \alpha_t\}$ and $\{\beta_1, \dots, \beta_s\}$.  Switching whether $k$ is an $\alpha_i$ or a $\beta_j$ flips the parity of $s$ and $t$.  Since this map is its own inverse, it is a bijection.  So
$$\#\{\text{$S$ fitting the first characterization}\}=
\begin{cases}
\frac{1}{2}\binom{2n}{n}-\frac{1}{2}\binom{n}{\frac{n}{2}}, & n \equiv 0 \pmod{4}\\
\frac{1}{2}\binom{2n}{n}+\frac{1}{2}\binom{n}{\frac{n}{2}}, & n \equiv 2 \pmod{4}.
\end{cases}$$

We now count the subsets that fit the second characterization.  First, consider subsets $S$ with the property that for all integers $1\leq k \leq \frac{n}{2}$, $$|\{2k-1, 2k\} \cap \{\alpha_1, \dots, \alpha_t\}| \neq 1 \quad \text{and} \quad |\{2k-1, 2k\} \cap \{\beta_1, \dots, \beta_s\}| \neq 1.$$
Clearly, all such subsets have $s$ and $t$ even, and there are $\binom{n}{\frac{n}{2}}$ of them (since any choice of $\frac{n}{2}$ odd elements determines all of the even elements).  Since $(2k-1)+(2k)$ is always odd, $(\beta_1+\dots+\beta_s)+(\alpha_1+\dots+\alpha_t) \equiv \frac{n}{2} \pmod{2}$, where $\frac{n}{2}$ is odd when $n \equiv 2 \pmod{4}$ and even when $n \equiv 0 \pmod{4}$.  Now, consider the other subsets with $s$ and $t$ even.  We can show that exactly half of these subsets have $(\beta_1+\dots+\beta_s)+(\alpha_1+\dots+\alpha_t)$ odd, for there is a bijection between the subsets where this expression is odd and the subsets where it is even.  Fix any such subset $S$.  There is a smallest $k$ such that $|\{2k-1, 2k\} \cap \{\alpha_1, \dots, \alpha_t\}|=1$ or, if no such $k$ exists, a smallest $\ell$ such that $|\{2\ell-1, 2\ell\} \cap \{\beta_1, \dots, \beta_s\}|=1$.  Switching whether $2k-1$ or $2k$ is in $\{\alpha_1, \dots, \alpha_t\}$ (or whether $2\ell-1$ or $2\ell$ is in $\{\beta_1, \dots, \beta_s\}$, respectively) flips the parity of $(\beta_1+\dots+\beta_s)+(\alpha_1+\dots+\alpha_t)$ without changing $s$ and $t$.  As before, this map is its own inverse and hence a bijection.  From the previous paragraph, we know that there are $\frac{1}{2}\binom{2n}{n}+\frac{1}{2}\binom{n}{\frac{n}{2}}$ subsets with $s$ and $t$ even if $n \equiv 0 \pmod{4}$ and $\frac{1}{2}\binom{2n}{n}-\frac{1}{2}\binom{n}{\frac{n}{2}}$ such subsets if $n \equiv 2 \pmod{4}$.  Thus,
$$\#\{\text{$S$ fitting the second characterization}\}=
\begin{cases}
\frac{1}{4}\binom{2n}{n}-\frac{1}{4}\binom{n}{\frac{n}{2}}, & n \equiv 0 \pmod{4}\\
\frac{1}{4}\binom{2n}{n}+\frac{1}{4}\binom{n}{\frac{n}{2}}, & n \equiv 2 \pmod{4}.
\end{cases}$$

The third characterization adds $1$ failing subset when $n \equiv 0 \pmod{4}$.  Taken together, the fourth and fifth characterizations add $2$ failing subsets when $n \equiv 4 \pmod{8}$.  With these small adjustments from the third, fourth, and fifth characterizations, summing the values from the first and second characterizations yields the desired result.
\\

For the asymptotic, recall that $\binom{2n}{n} \sim \frac{4^n}{\sqrt{\pi n}}$ for large $n$ (from Stirling's approximation).  When we divide the number of failing subsets by $\binom{2n}{n}$ (the total number of subsets of size $n$), the $\frac{3}{4}\binom{2n}{n}$ term dominates for large $n$.
\end{proof}

We can characterize failing subsets of size $n+1$ by building on this result.

\begin{theorem}
Let $n \geq 2$ and $m$ be even integers, and let $S=\{x^{\beta_1}, \dots, x^{\beta_s}, x^{\alpha_1}y, \dots, x^{\alpha_t}y\} \subseteq H_{n,m}$ satisfy $|S|=s+t=n+1$.  Then $S$ fails if and only if it has one of the following three forms:

\begin{enumerate}
\item $s$ is even and $t$ is odd, and all of the $\alpha_i$'s have the same parity, and $(\beta_1+\dots +\beta_s)+(\alpha_1+\dots+\alpha_{t-1})$ is odd.\\
\item $s$ is odd and $t$ is even, and all of the $\beta_j$'s have the same parity, and $(\beta_1+\dots +\beta_{s-1})+(\alpha_1+\dots+\alpha_t)$ is odd.\\
\item The third, fourth, or fifth characterization from Theorem \ref{n fail} applies to some subset of $S$ of size $n$.
\end{enumerate}  
\label{n+1 fail}
\end{theorem}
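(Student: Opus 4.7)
The starting observation is that $S \subseteq H_{n,m}$ of size $n+1$ fails if and only if every $n$-element subset $T \subset S$ fails, since $\prod_n(S) = \bigcup_{g \in S}\prod_n(S \setminus \{g\})$.  My plan is to classify those $n+1$-subsets all of whose $n$-subsets fit one of the five characterizations of Theorem \ref{n fail}.

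For sufficiency of the three listed characterizations I would proceed by direct verification.  Writing $S = \{x^{\beta_1},\ldots,x^{\beta_s},x^{\alpha_1}y,\ldots,x^{\alpha_t}y\}$, removing an $x^{\beta_k}$ yields an $n$-subset of type $(s-1, t)$, and removing an $x^{\alpha_j}y$ yields one of type $(s, t-1)$.  Under characterization 1 of the present theorem the former has both counts odd and matches characterization 1 of Theorem \ref{n fail}, while the latter has both counts even with $\sum\beta + \sum_{i\neq j}\alpha_i$ of the required odd parity (made $j$-independent by the common-parity hypothesis on the $\alpha_i$'s), matching characterization 2 of Theorem \ref{n fail}.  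Characterization 2 of the present theorem is symmetric.  For characterization 3 the distinguished $n$-subset fails by hypothesis, and a short parity check handles the remaining $n$-subsets via characterization 1 of Theorem \ref{n fail}.

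For necessity I would argue the contrapositive, producing a passing $n$-subset whenever $S$ matches none of the three characterizations.  Since $n$ is even, $s + t = n + 1$ is odd, so exactly one of $s, t$ is even; by symmetry I may take $s$ even, $t$ odd.  Each $S \setminus \{x^{\beta_k}\}$ has $(s-1, t)$ both odd and hence fails by characterization 1 of Theorem \ref{n fail}, so the passing subset must come from a deletion $S \setminus \{x^{\alpha_j}y\}$, of type $(s, t-1)$ both even.  If all $\alpha_i$ share a parity, the sum $\sum\beta + \sum_{i\neq j}\alpha_i$ is $j$-independent modulo $2$: if it is odd $S$ lies in characterization 1 of the present theorem, while if it is even the only way every such deletion can fail is through characterization 3 of Theorem \ref{n fail}, which forces $t=1$, $s=n$, $n \equiv 0 \pmod 4$ (placing $S$ in characterization 3 of the present theorem via $\{1,x,\ldots,x^{n-1}\}$); characterizations 4 and 5 of Theorem \ref{n fail} are excluded because they would demand $n/2+1$ same-parity $\alpha_i$'s while only $n/2$ exist.

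The hard part will be the subcase where the $\alpha_i$'s have mixed parity.  I would pick $j$ so that $\sum\beta + \sum_{i\neq j}\alpha_i$ is even, killing characterization 2 of Theorem \ref{n fail}; characterization 3 is already out because $t \geq 3$; only characterizations 4 and 5 remain, demanding $n \equiv 4 \pmod 8$, $s = t-1 = n/2$, $\{\beta\}$ and $\{\alpha\}\setminus\{\alpha_j\}$ each in $\{E, O\}$ with $E = \{0,2,\ldots,n-2\}$ and $O = \{1,3,\ldots,n-1\}$, and $m$ matching the parity class of $\{\beta\}$.  A short symmetric-difference count forces $\{\alpha\}$ to have the form $E \cup \{o\}$ or $O \cup \{e\}$, so the only dangerous $j$ is the outlier index, and checking the four combinations of $\{\beta\} \in \{E, O\}$ with $m \bmod 4$ shows that the configurations for which every choice of $j$ yields a failing subset are precisely those in which $S$ contains an $n$-subset matching characterization 4 or 5 of Theorem \ref{n fail}, i.e., characterization 3 of the present theorem.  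Outside these configurations a passing $n$-subset exists, and the case $s$ odd, $t$ even follows by the symmetric argument, yielding characterization 2 in place of characterization 1 of the present theorem.
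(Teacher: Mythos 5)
Your proposal is correct and follows essentially the same route as the paper: sufficiency by checking that every $n$-element deletion falls under one of the five characterizations of Theorem \ref{n fail}, and necessity by exhibiting a deletion with even counts and even $x$-exponent sum that avoids characterizations $3$--$5$ of that theorem. The one spot to patch is your sufficiency argument for characterization $3$: when you replace an element $x^{\gamma'}y^{\varepsilon'}$ of the distinguished failing $n$-subset $T$ by $h=x^{\gamma}y^{\varepsilon}$, the resulting subset fails by characterization $1$ \emph{or} $2$ of Theorem \ref{n fail} (according to whether $\varepsilon\neq\varepsilon'$, or $\varepsilon=\varepsilon'$ with $\gamma\not\equiv\gamma'\pmod 2$), and you must also note that the case $\varepsilon=\varepsilon'$ with $\gamma\equiv\gamma'\pmod 2$ cannot occur, since each set appearing in characterizations $3$--$5$ already contains every element of that parity class, so such an $h$ would already lie in $T$. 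Your necessity argument in the mixed-parity subcase also re-derives by hand when characterizations $4$ and $5$ can apply to a deletion; the paper shortcuts this by observing that the failure of characterization $3$ of the present theorem already excludes characterizations $3$--$5$ for \emph{every} $n$-subset of $S$, so only the parity conditions need to be arranged.
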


\begin{proof}
As in the proof of Theorem \ref{n fail}, we show the sufficiency of the above characterizations and then their necessity.
\\

In the first characterization, if an $x^{\beta_j}$ element is removed from $S$, then the remaining subset of $n$ elements contains an odd number of $y$'s, so no re-ordering of these $n$ elements yields a product of $1$.  Now, suppose that an $(x^{\alpha_i}y)$ element is removed from $S$.  By the second characterization in Theorem \ref{n fail}, no re-ordering of these $n$ elements yields a product of $1$, either.  Thus, $S$ fails, which shows the sufficiency of the first characterization.  The sufficiency of the second characterization follows in the same manner.
\\

For the third characterization, let $S=T\cup \{h\}$ where $T$ is the failing subset of size $n$ as described in Theorem \ref{n fail} and $h=x^{\gamma}y^{\varepsilon}$ ($0 \leq \gamma \leq n-1$, $\varepsilon \in \{0, 1\}$) is any other element of $H_{n,m}$.  Clearly, if $h$ is removed from $S$, then no re-ordering of the remaining $n$ elements yields a product of $1$.  Now suppose that the removed element is some $x^{\gamma '}y^{\varepsilon '}\in T$ so that the remaining subset of $n$ elements is $(T\cup \{x^{\gamma}y^{\varepsilon}\}) \setminus \{x^{\gamma '}y^{\varepsilon '}\}$.  Recall from the proof of Theorem \ref{n fail} that $T$ contains an even number of $x^{\alpha_i}y$ elements and that the sum of the powers of $x$ of all of the elements of $T$ is even.  In order for $(T\cup \{x^{\gamma}y^{\varepsilon}\}) \setminus \{x^{\gamma '}y^{\varepsilon '}\}$ to retain these two properties, we must have $\epsilon=\epsilon '$ and $\gamma \equiv \gamma ' \pmod{2}$.  But by inspection of the possibilities for $T$, this is impossible: $T$ already contains all elements $x^{\gamma ''}y^{\varepsilon ''}$ such that $\epsilon ''=\epsilon '$ and $\gamma '' \equiv \gamma ' \pmod{2}$.  So $(T\cup \{x^{\gamma}y^{\varepsilon}\}) \setminus \{x^{\gamma '}y^{\varepsilon '}\}$ is a failing subset of size $n$ by either the first or second characterization of Theorem \ref{n fail}.  Thus, the third characterization of Theorem \ref{n+1 fail} is sufficient for $S$ to fail.
\\

Now, suppose $S$ does not satisfy any of these three characterizations.  According to whether $s$ and $t$ are even or odd, we want to remove an $x^{\beta_j}$ element or an $x^{\alpha_i}y$ element, respectively, such that for the remaining $n$ elements, the sum of the powers of $x$ is even.  Since $S$ doesn't satisfy the first or second characterization, this is possible.  Furthermore, this subset of size $n$ does not fit the third, fourth, or fifth, characterization of Theorem \ref{n fail}, so its elements can be re-ordered to yield a product of $1$.  So $S$ passes, and the three characterizations of Theorem \ref{n+1 fail} are necessary as well as sufficient conditions for $S$ to fail.
\end{proof}

Even though we cannot easily use Theorem \ref{n+1 fail} to count the failing subsets of size $n+1$ precisely, we can still find asymptotic upper bounds.

\begin{corollary}
Let $n \geq 2$ and $m$ be even integers.  Then there exists an absolute constant $C>0$ such that $$\mathbb{P}(\text{$S$ fails})<C \sqrt{\frac{n}{2^n}}$$ for all $n$ and $m$ if $S$ is chosen uniformly at random from the subsets of size $n+1$ of each $H_{n,m}$.
\end{corollary}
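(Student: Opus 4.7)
The plan is to enumerate failing subsets via the three characterizations in Theorem \ref{n+1 fail}, bound each type separately, and compare against $\binom{2n}{n+1}$ using Stirling's approximation.

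I first handle characterizations 1 and 2, which are symmetric. In characterization 1, requiring all $\alpha_i$ to have the same parity confines them to one of the two parity classes of $\{0, 1, \ldots, n-1\}$, each of size $n/2$. Writing $s = n+1-t$ and ignoring the auxiliary sum-parity condition (which can only strengthen the upper bound), the number of type-1 failing subsets is at most
\[
2 \sum_{t=0}^{n/2} \binom{n/2}{t}\binom{n}{n+1-t} = 2\binom{3n/2}{n+1}
\]
by Vandermonde's identity, where the factor of $2$ accounts for the choice of parity class. Characterization 2 yields the same bound by symmetry. For characterization 3, each failing subset has the form $T \cup \{h\}$, where $T$ is one of the at most $1+2+2=5$ size-$n$ subsets fitting parts (3)--(5) of Theorem \ref{n fail} and $h \in H_{n,m} \setminus T$ has $n$ possible values; this contributes at most $5n$ failing subsets.

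Summing, the total number of failing subsets of size $n+1$ is at most $4\binom{3n/2}{n+1} + 5n$. Stirling's approximation yields
\[
\frac{\binom{3n/2}{n+1}}{\binom{2n}{n+1}} = O\!\left(\left(\tfrac{3\sqrt{3}}{8}\right)^n\right),
\]
and since $(3\sqrt{3}/8)^2 = 27/64 < 1/2 = (1/\sqrt{2})^2$, we have $3\sqrt{3}/8 < 1/\sqrt{2}$, so this ratio is $o(\sqrt{n/2^n})$. The $5n$ term contributes only $O(n^{3/2}/4^n)$, which is also negligible compared to $\sqrt{n/2^n}$. Taking the absolute constant $C$ large enough to absorb the finitely many small values of $n$ then yields the claimed uniform bound.

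The main obstacle is spotting that the same-parity restriction in characterizations 1 and 2 is precisely what enables a clean Vandermonde collapse; without that observation, bounding $\sum_t \binom{n/2}{t}\binom{n}{n+1-t}$ term-by-term is messier, and one risks a weaker bound. Once the Vandermonde identity is in place, the rest is routine Stirling analysis.
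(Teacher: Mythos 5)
Your proposal is correct and follows essentially the same route as the paper: decompose via the three characterizations of Theorem \ref{n+1 fail}, observe that the same-parity condition forces $t\leq \frac{n}{2}$, over-count by dropping the auxiliary parity-of-sum conditions, and compare against $\binom{2n}{n+1}$ via Stirling. The one genuine refinement is your Vandermonde collapse of the sum to $\binom{3n/2}{n+1}$, which replaces the paper's cruder ``number of terms times largest term'' bound of roughly $\frac{n}{4}\binom{n}{n/2}\binom{n/2}{\lceil n/4\rceil}\approx 2^{3n/2}$ with a quantity of order $(3\sqrt{3}/2)^n\approx 2^{1.38n}$, so your intermediate estimate is in fact exponentially stronger than the stated $C\sqrt{n/2^n}$ (consistent with the paper's remark that it makes no effort to optimize).
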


\textbf{Remarks.}
\begin{enumerate}
\item Since $\mathbb{P}(\text{$S$ fails})$ probably approaches $0$ at a faster rate than what this upper bound suggests, we make no effort to optimize constants.
\item This corollary tells us that for large $n$, failing subsets of size $n+1$ become vanishingly rare, in contrast with failing subsets of size $n$.  We can thus say that subsets of size $n+2$ are robust in the sense that if any single element is removed, the remaining subset of size $n+1$ almost always passes.
\end{enumerate}

\begin{proof}
Fix some $n$ and $m$.  We begin by counting the subsets that fit the first characterization.  Since all of the $\alpha_i$'s have the same parity, $t \leq \frac{n}{2}$ and hence $s \geq \frac{n}{2}+1$.  Note that $\alpha_1+\dots +\alpha_{t-1} \equiv (t-1)\alpha_1 \equiv 0 \pmod{2}$ regardless of the parity of the $\alpha_i$'s since $t$ is odd, so $\beta_1+\dots+\beta_s$ must be odd.  For each $s$, there are $\sim \frac{1}{2}\binom{n}{s}$ such choices for $\{\beta_1, \dots, \beta_s\}$ and then $2\binom{\frac{n}{2}}{n-s+1}$ choices for $\{\alpha_1, \dots, \alpha_t\}$, where $\binom{n}{s} <\binom{n}{\frac{n}{2}}$ and $\binom{\frac{n}{2}}{n-s+1}\leq \binom{\frac{n}{2}}{\lceil \frac{n}{4} \rceil}$.  Since there are $\lceil \frac{n}{4} \rceil$ possible values of $s$, we get, in total, $$\#\{\text{$S$ fitting the first characterization}\} \lesssim \left\lceil \frac{n}{4} \right\rceil \binom{n}{\frac{n}{2}} \binom{\frac{n}{2}}{\lceil \frac{n}{4} \rceil}$$
(where $f(n) \lesssim g(n)$ means that there exists some constant $c$ such that $f(n)<cg(n)$ for all sufficiently large $n$.)  It is clear that there is an equal number of sets satisfying the second condition.  The third characterization contributes at most $3n$ more failing subsets.
\\

Finally, we use Stirling's approximation to bound the asymptotic probability:
\begin{align*}
\mathbb{P}(\text{$S$ fails}) &\lesssim \frac{(2)\lceil \frac{n}{4} \rceil \binom{n}{\frac{n}{2}} \binom{\frac{n}{2}}{\lceil \frac{n}{4} \rceil}+3n}{\binom{2n}{n+1}}\\
 &\lesssim \frac{(\frac{n}{2})(\frac{4^{\frac{n}{2}}}{\sqrt{\frac{\pi n}{2}}})(\frac{4^{\frac{n}{4}}}{\sqrt{\frac{\pi n}{4}}})+3n}{\frac{4^n}{\sqrt{\pi n}}}\\
 &\lesssim \frac{(\frac{n}{2}) \sqrt{8}}{4^{\frac{n}{4}} \sqrt{\pi n}}\\
 &\lesssim \left(\sqrt{\frac{2}{\pi}}\right) \sqrt{\frac{n}{2^n}}
\end{align*}
for large values of $n$.  We can choose $C$ large enough to accomodate both large and small values of $n$.
\end{proof}

\section{Proof of the Main Theorem}

In this section, we prove all parts of the Main Theorem.  Only the fourth part requires substantial work.

\begin{theorem}[Main Theorem]
Let $n\geq 2$ and $m$ be integers.  Then:
$$
\mathfrak{g}(H_{n, m})=
\begin{cases}
2n+1, & n \text{ odd}\\
2n, &n \equiv 0 \pmod{4}, m \text{ odd}\\
2n+1, &n \equiv 2 \pmod{4}, m \text{ odd}\\
n+2, &n \text{ even, } n\neq 2, m \text{ even}\\
5, &n=2, m \text{ even}.
\end{cases}
$$
\label{main}
\end{theorem}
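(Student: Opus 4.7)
The plan is to handle the five cases of the Main Theorem separately, dispatching Cases 1, 2, 3, and 5 quickly and devoting the bulk of the argument to Case 4. For Cases 1 and 3, I exploit the abelianization. Since $[x,y]=x^{-2}$, the commutator subgroup of $H_{n,m}$ is $\langle x^2\rangle$, and $H_{n,m}^{ab}$ has order $2\gcd(n,2)$. For Case 1 ($n$ odd), $H_{n,m}^{ab}\cong\mathbb{Z}/2$ is generated by $\bar y$; each $x^iy$ maps to $\bar y$ and each $x^i$ maps to the identity, so the product of all $2n$ elements (in any order) has image $\bar y^n\neq 1$. Since $\exp(H_{n,m})=2n=|H_{n,m}|$ is the only candidate subset size, it fails, giving $\mathfrak g(H_{n,m})=2n+1$. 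For Case 3 ($n\equiv 2\pmod 4$, $m$ odd), $H_{n,m}^{ab}\cong\mathbb{Z}/4$ is generated by $\bar y$ with $\bar y^2=\bar x$, and the image of the total product is $\bar y^{n(2n-1)}\equiv \bar y^2\neq 1\pmod 4$. Case 2 is vacuous: conjugating $y^2=x^m$ by $y$ and using $yxy^{-1}=x^{-1}$ forces $x^{2m}=1$, hence $n\mid 2m$, incompatible with $n\equiv 0\pmod 4$ and $m$ odd. Case 5 is Remark 2 after Corollary \ref{nfailcor}.

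For Case 4, the lower bound $\mathfrak g(H_{n,m})\geq n+2$ comes from exhibiting a failing subset of size $n+1$. The set $\{1,x,\ldots,x^{n-1},y\}$ works: it fails via Char 1 of Theorem \ref{n+1 fail} when $n\equiv 2\pmod 4$ (since $n(n-1)/2$ is then odd) and via Char 3 when $n\equiv 0\pmod 4$ (applied to the subset $\{1,x,\ldots,x^{n-1}\}$, which fails by Char 3 of Theorem \ref{n fail}).

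The upper bound $\mathfrak g(H_{n,m})\leq n+2$ is the main content. Given $S=\{x^{\beta_1},\ldots,x^{\beta_s},x^{\alpha_1}y,\ldots,x^{\alpha_t}y\}$ with $s+t=n+2$, the plan is to omit two elements and show that the resulting size-$n$ subset admits an ordering with product $1$. Let $s',t'$ denote the kept counts; to make the product land in $\langle x\rangle$ we need both $s'$ and $t'$ even, and then the product lies in the coset $x^{B'+A'}\langle x^2\rangle$, where $B'$ and $A'$ are the sums of the kept $\beta_j$'s and $\alpha_i$'s. The coset contains $1$ precisely when $B'+A'$ is even, and once the coset is right, the size bound $|\prod^{\pm}_{s'}|+|\prod_{t'}|\geq n/2+1$ together with Lemma \ref{group} applied to $\langle x^2\rangle\lhd\langle x\rangle$ shows that the product set fills the entire coset and so contains $1$.

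The main obstacle is that Lemma \ref{big lemma} only guarantees $|\prod^{\pm}_{s'}|+|\prod_{t'}|\geq n/2$, with equality exactly when both kept subsets are arithmetic progressions filling $\mathbb{Z}/n\mathbb{Z}$. The slack $s+t=n+2>n$ gives several admissible choices of the omitted pair, and in all but rigid configurations some choice breaks the equality; in the remaining doubly-extremal configurations $S$ is essentially a failing $n$-subset from Theorem \ref{n fail} augmented by two extra elements, whose extra freedom lets us simultaneously arrange $B'+A'$ even and restore strict inequality, handled by direct product computation as in the fourth and fifth characterizations of Theorem \ref{n fail}. The technical heart is thus a parity case analysis on $(s,t)$ and on the residues of the $\alpha_i$'s and $\beta_j$'s modulo $2$, coupled with explicit treatment of the extremal configurations.
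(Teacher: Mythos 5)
Your Cases 1, 3, and 5, and the lower bound in Case 4, are correct and are essentially the paper's own arguments in different clothing: passing to $H_{n,m}/\langle x^2\rangle$ is exactly the paper's bookkeeping of the parity of the number of $y$'s and of the total power of $x$, and $\{1,x,\dots,x^{n-1},y\}$ is a valid witness for $\mathfrak{g}\geq n+2$. The serious problem is Case 2. Your derivation of $x^{2m}=1$ from the presentation is correct, but it does not make the case vacuous: it shows that when $n\nmid 2m$ the presented group collapses to one of order at most $n$, for which $\mathfrak{g}\leq |G|+1<2n$, so the asserted value $2n$ is not ``vacuously true'' of anything. Worse, the same collapse occurs in instances of Cases 3 and 4 (e.g.\ $n=8$, $m=2$ has $n\nmid 2m$), which you nonetheless prove using the $2n$-element normal form; the collapse cannot be invoked selectively. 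The paper works throughout in the $2n$-element normal-form model and proves Case 2 non-vacuously: the product of all $2n$ elements in the standard order equals $x^{(n/2)m}$, an even power of $x$ because $4\mid n$, and moving a single term $x^{e/2}$ (where $e\equiv (n/2)m \pmod n$) to the other side of a $y$ subtracts $e$ from the exponent and produces $1$. If you adopt the paper's conventions, as your other cases implicitly do, you must supply this short computation; if you insist on the honest quotient group, you are objecting to the theorem's statement rather than proving it.

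In Case 4 your upper-bound strategy is the paper's, but the two places where the work actually happens are left as gestures. First, $n=4$ is not covered: the device of re-choosing the omitted pair to break an extremal configuration needs $n\geq 6$ so that the newly kept $\beta$'s (or $\alpha$'s) are not themselves one of the forbidden progressions, and the paper disposes of $n=4$ by direct enumeration of the subcases $(s,t)\in\{(4,2),(3,3),(2,4)\}$. Second, the ``doubly-extremal'' endgame must be pinned down rather than described. Equality in both applications of Lemma \ref{big lemma} forces (via Remark 4) $s'$ and $t'$ to divide $n$, hence $s'=t'=n/2$ with each kept set equal to $\{0,2,\dots,n-2\}$ or $\{1,3,\dots,n-1\}$; the paper then observes that omitting the smallest-indexed elements instead of the largest-indexed ones preserves the parity of the exponent sum while destroying at least one of the two progressions, which restores the strict inequality $|\prod^{\pm}_{s'}|+|\prod_{t'}|\geq \frac{n}{2}+1$ needed for Lemma \ref{group}. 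Your sketch names this obstacle correctly but does not verify that a parity-preserving re-choice always exists and always breaks extremality; that verification is precisely the content of the proof, so as written the proposal asserts the conclusion at the critical step rather than establishing it.
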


\begin{proof}

We prove the Main Theorem part-by-part.  Only the case of $n\geq 4$ and $m$ even requires substantial work.
\\

\textbf{Case $1$.} $n$ odd.
\\

Recall from Proposition \ref{exponent} that $\exp(H_{n,m})=2n$.  Thus, $\mathfrak{g}(H_{n,m})\in \{2n, 2n+1\}$.  The only subset of $H_{n,m}$ of size $2n$ is $H_{n,m}$ itself.  This subset contains an odd number of $y$'s, so $1 \notin \prod_{2n}(H_{n,m})$, which establishes $\mathfrak{g}(H_{n,m})\geq 2n+1$ and hence $\mathfrak{g}(H_{n,m})=2n+1$.
\\

\textbf{Case $2$.} $n \equiv 0 \pmod{4}$, $m$ odd.
\\

As above, $\exp(H_{n,m})=2n$.  We know that $$(1)(x)\cdots (x^{n-1})(y)(xy)\cdots (x^{n-2}y)(x^{n-1}y)=(x^{\frac{n}{2}})(x^{m-1})^{\frac{n}{2}}=x^{\frac{n}{2}(m)}$$ is an even power of $x$ because $\frac{n}{2}$ is even.  Let $0\leq e \leq n-2$ so that $e \equiv \frac{n}{2}(m) \pmod{n}$.  If we move the $x^{\frac{e}{2}}$ term to the right of the $y$ term, then the resulting product is $x^{\frac{n}{2}(m)-2(\frac{e}{2})}=1$, which establishes $\mathfrak{g}(H_{n,m})=2n$.
\\

\textbf{Case $3$.} $n \equiv 2 \pmod{4}$, $m$ odd.
\\

We still have $\exp(H_{n,m})=2n$.  Since both $\frac{n}{2}$ and $m$ are odd, $$(1)(x)(x^2)\cdots (x^{n-1})(y)(xy)\cdots (x^{n-2}y)(x^{n-1}y)=x^{\frac{n}{2}(m)}$$ is an odd power of $x$.  So $1 \notin \prod_{2n}(H_{n,m})$, which establishes $\mathfrak{g}(H_{n,m})=2n+1$.
\\

\textbf{Case $4$.} $n$ even, $m$ even.
\\

Recall that $\exp(H_{n,m})=n$.  As described in Theorem \ref{n+1 fail}, $H_{n,m}$ contains failing subsets of size $n+1$, so we immediately have $\mathfrak{g}(H_{n,m})\geq n+2$.  Recall from Remark $2$ after Corollary \ref{n corollary} that $\mathfrak{g}(H_{2,m})=5$.  The rest of this proof is devoted to showing that for $n\geq 4$, any subset of $H_{n,m}$ of size $n+2$ contains $n$ distinct elements whose product is $1$.  This will imply that $\mathfrak{g}(H_{n,m})\leq n+2$ and hence $\mathfrak{g}(H_{n,m})=n+2$.  Let $S=\{x^{\beta_1}, \dots, x^{\beta_s}, x^{\alpha_1}y, \dots, x^{\alpha_t}y\} \subset H_{n,m}$ with $|S|=s+t=n+2$.
\\

First, consider $n=4$.  If $s=4$ and $t=2$, then $S=\{1, x, x^2, x^3, x^{\alpha_1}y, x^{\alpha_2}y\}$.  Since $(x)(x^3)=1$, $(1)(x)=x$, $(1)(x^2)=x^2$, and $(1)(x^3)=x^3$, we can always find $0 \leq \beta_1<\beta_2 \leq 3$ such that $(x^{\beta_1})(x^{\beta_2})(x^{\alpha_1}y)(x^{\alpha_2}y)=1$.  If $s=2$ and $t=4$, then $(x^2y)(y)(x^3y)(xy)=x^{2(2+m)}=1$ works.  If $s=t=3$, then, without loss of generality, let $\beta_2$ and $\beta_3$ ($\beta_2<\beta_3$) be of the same parity, and let $\alpha_3$ be of the parity that makes $(x^{\beta_1})(x^{\beta_2})(x^{\alpha_1}y)(x^{\alpha_2}y)=x^{\gamma}$ an even power of $x$.  If $\gamma \equiv 0 \pmod{4}$, then we are done, and if $\gamma \equiv 2 \pmod{4}$, then $(x^{\beta_1})(x^{\beta_3})(x^{\alpha_1}y)(x^{\alpha_2}y)=1$ works.  So we can conclude that $\mathfrak{g}(H_{4, m})=6$, as desired.
\\

Henceforth, consider $n \geq 6$.  I claim that $S$ contains $n$ elements whose product is in $\{1, x^2, x^4, \dots, x^{n-2}\}$.  We know that $\max\{s, t\} \geq \frac{n+2}{2}=\frac{n}{2}+1$, so the larger of $\{\beta_1, \dots, \beta_s\}$ and $\{\alpha_1, \dots, \alpha_t\}$ necessarily contains both even and odd elements.  If $s$ and $t$ are odd, then we can remove an $x^{\beta_j}$ element and an $x^{\alpha_i}y$ element so that the product of the remaining $n$ elements is in $\{1, x^2, x^4, \dots, x^{n-2}\}$.  If $s$ and $t$ are even, then, according to whether $s\geq t$ or $s<t$, we can remove either two $x^{\beta_j}$ elements or two $x^{\alpha_i}y$ elements so that the product of the remaining $n$ elements is in $\{1, x^2, x^4, \dots, x^{n-2}\}$.  This establishes the claim.
\\

Without loss of generality, suppose we removed the elements with the largest $\alpha_i$ and/or $\beta_j$ subscript labels so that the remaining $n$ elements are given by $T=\{x^{\beta_1}, \dots, x^{\beta_u}, x^{\alpha_1}y, \dots, x^{\alpha_v}y\}$, where $2 \leq u, v, \leq n-2$ by construction.  Following the reasoning from the proof of Theorem \ref{n fail}, we see that necessarily $$\prod_n(T)=\{1, x^2, x^4, \dots, x^{n-2}\}$$ (in which case we are done) unless
$$\{\beta_1, \dots, \beta_u\}, \{\alpha_1, \dots, \alpha_v\} \in \{\{0, 2, \dots, n-2\}, \{1, 3, \dots, n-1\}\}.$$

Suppose this is the case, and distinguish possibilities based on the parity of $s$ and $t$.  If $s$ and $t$ are odd, then removing $x^{\beta_1}$ and $x^{\alpha_1}y$ instead of $x^{\beta_s}$ and $x^{\alpha_t}y$ produces a subset $T'$ consisting of $n$ elements whose product is still in $\{1, x^2, x^4, \dots, x^{n-2}\}$ because we have not changed the parity of the total powers of $x$.  But now $$\{\beta_2, \dots, \beta_s\}, \{\alpha_2, \dots, \alpha_t\} \notin \{\{0, 2, \dots, n-2\}, \{1, 3, \dots, n-1\}\},$$
which implies that $$\prod_n(T')=\{1, x^2, x^4, \dots, x^{n-2}\}.$$
If $s$ and $t$ are even, then we have two subcases.  If $s \geq t$, then removing $x^{\beta_1}$ and $x^{\beta_2}$ instead of $x^{\beta_{s-1}}$ and $x^{\beta_s}$ produces a subset $T'$ such that $\prod_n(T')=\{1, x^2, x^4, \dots, x^{n-2}\}.$  (Here, we used $n \geq 6$ to ensure that $\{\beta_3, \dots, \beta_s\}$ is not a different forbidden arithmetic sequence.)  Similarly, if $s<t$, then removing $x^{\alpha_1}y$ and $x^{\alpha_2}y$ instead of $x^{\alpha_{t-1}}y$ and $x^{\alpha_t}y$ produces such a subset $T'$.  This completes the casework and lets us conclude that $\mathfrak{g}(H_{n,m})=n+2$.
\end{proof}

\section{Additional Results and Topics for Future Inquiry}

\subsection{An extension of the Main Theorem}

A close examination of the proof of the Main Theorem for $n$ and $m$ even reveals the following stronger statement.

\begin{corollary}
Let $n\geq 4$ and $m$ be even integers.  Then for any subset $S\subset H_{n,m}$ satisfying $|S|=n+2$, we have $$\prod_n(S)=H_{n,m}.$$
\end{corollary}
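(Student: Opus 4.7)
The plan is to strengthen the argument in Case 4 of the Main Theorem by tracking all four cosets of the normal subgroup $N = \langle x^2 \rangle \trianglelefteq H_{n,m}$ simultaneously. Denote these cosets by
$$C_{0,0} = \{1, x^2, \ldots, x^{n-2}\}, \quad C_{1,0} = \{x, x^3, \ldots, x^{n-1}\},$$
$$C_{0,1} = \{y, x^2y, \ldots, x^{n-2}y\}, \quad C_{1,1} = \{xy, x^3y, \ldots, x^{n-1}y\},$$
each of size $n/2$. It suffices to produce, for each coset $C_{i, \epsilon}$, a subset $T' \subset S$ with $|T'| = n$ satisfying $\prod_n(T') = C_{i, \epsilon}$; taking unions then gives $\prod_n(S) = H_{n,m}$.

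For any $T' = \{x^{\beta'_1}, \ldots, x^{\beta'_{s'}}, x^{\alpha'_1}y, \ldots, x^{\alpha'_{t'}}y\} \subset S$ of size $n$, the product set $\prod_n(T')$ is contained in the coset $C_{i, \epsilon}$ where $\epsilon \equiv t' \pmod{2}$ and $i \equiv \sum \beta'_j + \sum \alpha'_i \pmod{2}$ (using that $m$ is even). The argument from the proof of Theorem \ref{n fail}, combined with Lemma \ref{group} applied to $N$ and the equality conditions in Lemma \ref{big lemma}, shows that this containment is an equality $\prod_n(T') = C_{i, \epsilon}$ \emph{unless} $T'$ falls into an exceptional configuration: either $T' = \langle x \rangle$ or $T' = \{y, xy, \ldots, x^{n-1}y\}$ (as in characterization 3 of Theorem \ref{n fail}), or $s' = t' = n/2$ with both $\{\beta'_j\}$ and $\{\alpha'_i\}$ being arithmetic progressions of step 2 (the shared setting of characterizations 4 and 5). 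Thus the task reduces to finding, for each target coset, a removal of 2 elements from $S$ producing a $T'$ with the correct parities and outside these exceptional configurations.

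Parity realizability is straightforward: since $s + t = n + 2 > n$, at least one of $\{\beta_j\}$, $\{\alpha_i\}$ contains elements of both parities, which gives enough slack (organized by whether $s$ and $t$ are both even or both odd) to attain all four $(\epsilon, i)$ combinations among the available 2-element removals. The main obstacle is avoiding the exceptional configurations. The first type ($T' = \langle x \rangle$) can only arise when $s = n$ and $t = 2$ with the removal of both $y$-elements, so replacing this with a mixed removal dodges it; the symmetric situation is handled analogously. For the AP-structure exception, the single extra element of $S$ beyond size $n$ provides the required flexibility: if $\{\beta_j\}$ contains the whole progression $\{0, 2, \ldots, n-2\}$ together with some element of the opposite parity, then removing an AP element (rather than the odd element) leaves $\{\beta'_j\}$ not an AP of step 2, and similarly for $\{\alpha_i\}$. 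This swap argument---identical in spirit to the one used at the end of the Main Theorem's proof---succeeds for every target coset. The small case $n = 4$ is handled by direct casework, just as in the Main Theorem.
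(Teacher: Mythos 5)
Your strategy for the two cosets $C_{0,0}$ and $C_{1,0}$ of $N=\langle x^2\rangle$ --- remove two elements of $S$ to control the parity of $t'$ and of the total $x$-exponent, then swap which elements are removed so as to escape the step-$2$ arithmetic-progression configurations where the equality case of Lemma \ref{big lemma} could occur --- is exactly the paper's argument. The gap is in your treatment of $C_{0,1}$ and $C_{1,1}$. Hitting those cosets forces $s'$ and $t'$ both \emph{odd}, and the justification you cite (the proof of Theorem \ref{n fail} plus the equality conditions of Lemma \ref{big lemma}) only covers even $s', t'$. The paper's point here is that for odd parameters the third case of Lemma \ref{big lemma} (with Remark 5 for the plus-minus products) gives $|\prod^{\pm}_{s'}|+|\prod_{t'}|\ge \frac{s'+1}{2}+\frac{t'+1}{2}=\frac{n}{2}+1>|N|$, so Lemma \ref{group} yields the full coset immediately and there are \emph{no} exceptional configurations to avoid. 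Your write-up instead imports the even-case exception ($s'=t'=\frac{n}{2}$ with both exponent sets step-$2$ APs) and asserts that the swap argument always avoids it; that assertion is false as stated. For instance, with $n\equiv 2\pmod 4$, $s=t=\frac{n}{2}+1$, and $\{\beta_j\}=\{\alpha_i\}=\{0,2,\dots,n-2\}\cup\{1\}$, every one-from-each-side removal that makes the total $x$-exponent odd leaves one of the two exponent sets equal to $\{0,2,\dots,n-2\}$. Your conclusion survives only because that configuration is not actually exceptional when $s',t'$ are odd --- which is precisely the observation your proof is missing.

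A second, smaller slip: to dodge $T'=\langle x\rangle$ when $s=n$ and $t=2$, you propose a ``mixed removal,'' but removing one $x$-power and one $y$-element makes $t'$ odd, so $\prod_n(T')$ lands in a $y$-coset rather than the intended $C_{i,0}$. The correct repair is to remove two $x$-powers: since $\{\beta_j\}=\{0,1,\dots,n-1\}$ contains both parities, either parity of the removed pair's sum is available, and the resulting $s'=n-2$ does not divide $n$ for $n\ge 6$, so the equality case of Lemma \ref{big lemma} cannot occur and Lemma \ref{group} applies. With these two repairs your proof coincides with the paper's.
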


\begin{proof}
Consider the various elements $h \in H_{n,m}$.  The proof of the Main Theorem makes it clear that $h\in \prod_n(S)$ when $h=x^a$ for $a$ even.  When $h=x^a$ for $a$ odd, we can adapt the proof so that in our subsets of size $n$, the sum of the powers of $x$ is odd, after which we can apply the same reasoning about avoiding forbidden arithmetic progressions.  When $h=x^ay$, the proof idea is even easier.  We can choose to remove two elements of $S$ so that in the (re-indexed) remaining set $T=\{x^{\beta_1}, \dots, x^{\beta_u}, x^{\alpha_1}y, \dots, x^{\alpha_v}y\}$, $u$ and $v$ are odd and $(\beta_1+\dots+\beta_u)+(\alpha_1+\dots+\alpha_v)$ is the same parity as $a$.  The third case of Lemma \ref{big lemma} gives
$$ |\prod^{\pm}_u(\{x^{\beta_1}, \dots, x^{\beta_u}\})|+|\prod_v(\{x^{\alpha_1}y, \dots, x^{\alpha_v}y\})| \geq \frac{u+1}{2}+\frac{v+1}{2}=\frac{n}{2}+1,$$
at which point an application of Lemma \ref{group} yields the desired result.
\end{proof}

Investigating this ``stronger'' Harborth condition in other groups would be interesting.

\subsection{Connection to the plus-minus weighted analogue}
The ability of the powers of $x$ to contribute either positively or negatively to products is reminiscent of the plus-minus weighting discussed in \cite{abelian}.  It is easy to see that $\mathfrak{g}_{\pm}(H_{n, m})=n+2$ when $n \geq 6$ and $m$ are both even.  (The plus-minus weighted Harborth constant is always less than or equal to the ordinary Harborth constant.  For $n \equiv 2 \pmod{4}$, $\{1, x, \dots, x^{n-1}, y\}$ is a failing subset of size $n+1$, and for $n \equiv 0 \pmod{4}$, $\{1, x, \dots x^{n-3}, y, x^2y, x^4y\}$ is such a subset.)  So, in a sense, the commutator relation $yx=x^{-1}y$ builds in enough ``flexibility'' that the Harborth constant is stable under the introduction of plus-minus weightings.  Further analogies to the discussion in \cite{abelian} could also be of future interest.

\subsection{Erd\H{o}s-Ginzburg-Ziv constants}
Recall from the Introduction that Bass \cite{bass} computed the EGZ constants of all dihedral and dicyclic groups: in both cases, $\mathsf{s}(G)=\frac{3}{2}|G|$.  In the same paper, he suggests the (still open) problem of computing the EGZ constants of other semidirect products of cyclic groups.  In light of the present results on Harborth constants, we think that generalization to the metacyclic groups discussed in this paper might be more fruitful.  In particular, we present the following conjecture.

\begin{conjecture}
Let $n\geq 4$ and $m$ be even integers.  Then $\mathsf{s}(H_{n,m})=\frac{3}{2}|H_{n,m}|=3n.$
\end{conjecture}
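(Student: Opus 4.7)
My plan is to prove the two inequalities $\mathsf{s}(H_{n,m}) \geq 3n$ and $\mathsf{s}(H_{n,m}) \leq 3n$ separately.

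For the lower bound, I would adapt Bass's construction for dihedral groups \cite{bass} to general even $m$. Consider the multisequence $T$ in $H_{n,m}$ consisting of $n-1$ copies of the identity together with $n$ copies of $y$ and $n$ copies of $xy$, for a total length of $3n-1$. A length-$n$ subsequence is specified by $a+b+c = n$ with $a$ copies of $1$, $b$ of $y$, and $c$ of $xy$. Using the normal-form computation
\[
(x^{\alpha_1}y)\cdots(x^{\alpha_k}y) = x^{\alpha_1 - \alpha_2 + \cdots \pm \alpha_k + \lfloor k/2 \rfloor m} \cdot y^{k \bmod 2},
\]
one sees that if $b+c$ is odd the product lies outside $\langle x \rangle$, while if $b+c$ is even the product is $x^{E + \frac{b+c}{2}m}$ with $E$ an alternating sum of $0$'s and $1$'s ranging over $\{-c, -c+2, \ldots, c\}$. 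I would verify that no admissible choice of $(a,b,c)$ and $E$ gives $E + \frac{b+c}{2}m \equiv 0 \pmod n$, leveraging that both $n$ and $m$ are even; should certain residue classes of $m$ cause a collision, replacing $xy$ by a tuned $x^{\alpha}y$ should fix the obstruction.

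For the upper bound, let $S$ be a multisequence of length $3n$ in $H_{n,m}$, split according to the coset decomposition $H_{n,m} = \langle x \rangle \sqcup y\langle x \rangle$. Write $s$ and $t = 3n - s$ for the number of elements from $\langle x \rangle$ and $y\langle x \rangle$ respectively. In the regime $s \geq 2n-1$, the classical EGZ theorem applied inside $\langle x \rangle \cong \mathbb{Z}/n\mathbb{Z}$ extracts $n$ of the $x^{\beta_i}$'s with product $1$. When $t$ is large, pair up $y$-coset elements---each pair $(x^\alpha y)(x^\beta y) = x^{\alpha - \beta + m}$ converting into an $\langle x \rangle$-element---and iterate EGZ, taking care that the final selection has length exactly $n$. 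In the intermediate regime, choose $s' + t' = n$ with $t'$ even, and use a multiset analogue of Lemma \ref{big lemma} together with Lemma \ref{group} to show that
\[
\prod\nolimits_{s'}^{\pm}(\{x^{\beta_i}\}) \cdot \prod\nolimits_{t'}(\{x^{\alpha_j}y\}) = \langle x \rangle,
\]
which in particular contains $1$.

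The main obstacle is extending the set-based machinery of Section~2 to multisequences with repetition. Lemma \ref{big lemma} requires distinct elements, so in the multisequence setting one must stratify by ``support'' in each coset and then allocate multiplicities across the $n$ chosen positions. The length-$n$ constraint interacts with the parity constraint on $t'$ and with the requirement that the product land in the identity coset, so I expect the technical bulk of the proof to lie in the intermediate case, possibly handled by a Cauchy--Davenport-style sumset argument in $\langle x \rangle$ combined with Hall-type matchings for the $y$-coset pairings. Handling the boundary $s \approx 2n-2$ case, where neither pure EGZ on $\langle x \rangle$ nor aggressive pairing quite suffices, is where the most delicate combinatorics will need to be threaded through.
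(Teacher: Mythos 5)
This statement is stated in the paper only as a conjecture; the paper offers no proof, so your attempt cannot be compared against one. Judged on its own terms, the attempt has a fatal, concrete error in the lower bound. Since $n$ and $m$ are both even, $\exp(H_{n,m})=n$ by Proposition \ref{exponent}, so \emph{every} element $g\in H_{n,m}$ satisfies $g^n=1$; in particular $y^n=(y^2)^{n/2}=x^{nm/2}=1$. Your proposed extremal sequence contains $n$ copies of $y$, and those $n$ copies already form a product-one subsequence of length $n=\exp(H_{n,m})$. The hedge of ``replacing $xy$ by a tuned $x^{\alpha}y$'' cannot repair this, because the obstruction comes from the $n$ copies of $y$ alone, and the same computation shows that no element of $H_{n,m}$ may appear $n$ or more times in a valid extremal sequence. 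Any lower-bound construction of length $3n-1$ must therefore spread its multiplicities over at least four distinct elements (three elements of multiplicity at most $n-1$ give only $3n-3$), which is a genuinely different and harder design problem than the one you set up.

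The upper bound is likewise not a proof but a plan, and its central difficulty is acknowledged rather than resolved: Lemma \ref{big lemma} is a statement about \emph{sets} of distinct elements, and its lower bounds $|\prod_t(S)|\geq \lceil t/2\rceil$ fail badly for multisequences (a block of $t$ equal elements $x^{\alpha}y$ has $|\prod_t|=1$). Since the entire Section 2 machinery, and hence the Lemma \ref{group} sumset step you invoke, rests on these cardinality bounds, the ``intermediate regime'' of your argument has no working engine. The EGZ step in the regime $s\geq 2n-1$ is fine, and the pairing idea $(x^{\alpha}y)(x^{\beta}y)=x^{\alpha-\beta+m}$ is a reasonable starting point, but as written the proposal establishes neither inequality.
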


\subsection{Other nonabelian groups}

A natural topic for further inquiry is computing the Harborth constants for metacyclic groups $H_{n, p, m, r}=\langle x, y \mid x^n=1, y^p=x^m, yx=x^{-r}y \rangle$ for other values of $p$ and $r$.  We believe that even if exact results are difficult to compute, it should be possible to develop good bounds in some cases.  The techniques of Lemma \ref{big lemma} seem especially promising for the case $r^2 \equiv 1 \pmod{n}$.  Other classes of supersolvable nonabelian groups, such as the generalized dihedral groups, are also good candidates for computing Harborth constants because they admit simple normal forms.

\section*{Appendix: Proof of Lemma \ref{big lemma}}

In this appendix, we prove all of the parts of Lemma \ref{big lemma}.  The bulk of the proof is devoted to showing the necessity of the equality condition for $n$ even and $t$ odd.  Even though this part of Lemma \ref{big lemma} is not used elsewhere in this paper, we include it both because it is of independent interest and because it corrects an error in \cite{dihedral}.

\begin{proof}
We distinguish four cases based on the parity of $n$ and $t$.\\

\textbf{Case $1$.} $n$ odd, $t$ even.\\

First, consider $t=0$.  $\prod_0(S)=\{1\}$ and $|\prod_0(S)|=1\geq 0$.  Now, consider $t=2$.  $\prod_2(S)=\{x^{\alpha_2-\alpha_1+m}, x^{\alpha_1-\alpha_2+m}\}$ where $\alpha_2-\alpha_1$ and $\alpha_1-\alpha_2$ are distinct modulo $n$ since $n$ is odd.  So $|\prod_2(S)|=2$ for all choices of $0\leq \alpha_1<\alpha_2<n$.
\\

Henceforth, consider $t\geq 4$.  Write $$(x^{\alpha_{\frac{t}{2}+1}}y)(x^{\alpha_1}y)(x^{\alpha_{\frac{t}{2}+2}}y)(x^{\alpha_2}y)\dots (x^{\alpha_t}y)(x^{\alpha_{\frac{t}{2}}}y)=x^{\alpha}\in \prod_t(S)$$ where $\alpha=(\alpha_{\frac{t}{2}+1}+\alpha_{\frac{t}{2}+2}+\dots +\alpha_t)-(\alpha_1+\alpha_2+\dots +\alpha_{\frac{t}{2}})+(\frac{t}{2})m$.
For $1 \leq i, j \leq \frac{t}{2}$, swapping the terms $(x^{\alpha_i}y)$ and $(x^{\alpha_{\frac{t}{2}+j}}y)$ gives $$x^{\alpha -2(\alpha_{\frac{t}{2}+j}-\alpha_i)}\in \prod_t (S).$$
Note that $$0<\alpha_{\frac{t}{2}+1}-\alpha_{\frac{t}{2}}<\alpha_{\frac{t}{2}+1}-\alpha_{\frac{t}{2}-1}<\dots <\alpha_{\frac{t}{2}+1}-\alpha_1<\alpha_{\frac{t}{2}+2}-\alpha_1<\dots <\alpha_t-\alpha_1<n.$$
Since $n$ is odd, these $t$ differences (including $0$) remain distinct modulo $n$ when they are doubled, so $|\prod_t(S)|\geq t$, as desired.
\\

For the sake of contradiction, suppose there is equality.  Then the $t$ elements of $\prod_t(S)$ are exactly $$\prod_t(S)=\{x^{\alpha-2(0)}, x^{\alpha-2(\alpha_{\frac{t}{2}+1}-\alpha_{\frac{t}{2}})}, \dots, x^{\alpha-2(\alpha_{\frac{t}{2}+1}-\alpha_1)}, x^{\alpha-2(\alpha_{\frac{t}{2}+2}-\alpha_1)}, \dots, x^{\alpha-2(\alpha_t-\alpha_1)}\}.$$
In particular, $x^{\alpha-2(\alpha_{\frac{t}{2}+2}-\alpha_2)}$ is on this list.  (Here, we used $t\geq 4$.)  From $$\alpha_{\frac{t}{2}+1}-\alpha_2<\alpha_{\frac{t}{2}+2}-\alpha_2<\alpha_{\frac{t}{2}+2}-\alpha_1,$$ we can conclude that $$\alpha_{\frac{t}{2}+2}-\alpha_2=\alpha_{\frac{t}{2}+1}-\alpha_1 \quad \text{and} \quad \alpha_{\frac{t}{2}+2}-\alpha_{\frac{t}{2}+1}=\alpha_2-\alpha_1.$$
Next, $\alpha_{\frac{t}{2}+1}-\alpha_3<\alpha_{\frac{t}{2}+2}-\alpha_3<\alpha_{\frac{t}{2}+2}-\alpha_2=\alpha_{\frac{t}{2}+1}-\alpha_1$ implies $\alpha_{\frac{t}{2}+2}-\alpha_3=\alpha_{\frac{t}{2}+1}-\alpha_2$ and $\alpha_{\frac{t}{2}+2}-\alpha_{\frac{t}{2}+1}=\alpha_3-\alpha_2$.  Continuing this process leads to $$\alpha_{\frac{t}{2}+2}-\alpha_{\frac{t}{2}+1}=\alpha_2-\alpha_1=\alpha_3-\alpha_2=\dots =\alpha_{\frac{t}{2}}-\alpha_{\frac{t}{2}-1}.$$
Similarly, $\alpha_{\frac{t}{2}+1}-\alpha_1=\alpha_{\frac{t}{2}+2}-\alpha_2<\alpha_{\frac{t}{2}+3}-\alpha_2<\alpha_{\frac{t}{2}+3}-\alpha_1$ implies $\alpha_{\frac{t}{2}+3}-\alpha_2=\alpha_{\frac{t}{2}+2}-\alpha_1$ and $\alpha_{\frac{t}{2}+3}-\alpha_{\frac{t}{2}+2}=\alpha_2-\alpha_1$.  As above, continuing this process leads to $$\alpha_2-\alpha_1=\alpha_{\frac{t}{2}+3}-\alpha_{\frac{t}{2}+2}=\alpha_{\frac{t}{2}+4}-\alpha_{\frac{t}{2}+3}=\dots =\alpha_t-\alpha_{t-1}.$$
Furthermore, we may consider swapping two disjoint pairs of elements at once.  In particular, $\alpha_t-\alpha_1<(\alpha_t-\alpha_1)+(\alpha_{\frac{t}{2}+1}-\alpha_{\frac{t}{2}})<n+(\alpha_{\frac{t}{2}+1}-\alpha_{\frac{t}{2}})$ implies $(\alpha_t-\alpha_1)+(\alpha_{\frac{t}{2}+1}-\alpha_{\frac{t}{2}})=n$.
\\

Now, consider $t=4$.  The previous equation becomes $(\alpha_4-\alpha_1)+(\alpha_3-\alpha_2)=n$.  Using $\alpha_4-\alpha_2=\alpha_3-\alpha_1$ (from above), we get $2(\alpha_3-\alpha_1)=n$, but this is impossible since $n$ is odd.
\\

Henceforth, consider $t \geq 6$.  As in the previous paragraph, $n=(\alpha_t-\alpha_1)+(\alpha_{\frac{t}{2}+1}-\alpha_{\frac{t}{2}})<(\alpha_t-\alpha_1)+(\alpha_{\frac{t}{2}+1}-\alpha_{\frac{t}{2}-1})<n+(\alpha_{\frac{t}{2}+1}-\alpha_{\frac{t}{2}-1})$ implies $(\alpha_t-\alpha_1)+(\alpha_{\frac{t}{2}+1}-\alpha_{\frac{t}{2}-1})=n+(\alpha_{\frac{t}{2}+1}-\alpha_{\frac{t}{2}})$.  (All of these elements are distinct because $t \geq 6$.)  Substituting for $n$ gives
$$(\alpha_t-\alpha_1)+(\alpha_{\frac{t}{2}+1}-\alpha_{\frac{t}{2}-1})=((\alpha_t-\alpha_1)+(\alpha_{\frac{t}{2}+1}-\alpha_{\frac{t}{2}}))+(\alpha_{\frac{t}{2}+1}-\alpha_{\frac{t}{2}}),$$
which after cancellations becomes
$$\alpha_{\frac{t}{2}}-\alpha_{\frac{t}{2}-1}=\alpha_{\frac{t}{2}+1}-\alpha_{\frac{t}{2}}.$$
Putting everything together, we get 
$$\alpha_2-\alpha_1=\alpha_3-\alpha_2=\dots=\alpha_t-\alpha_{t-1}$$ and hence
$$\alpha_1=b, \quad \alpha_2=b+d, \quad \alpha_3=b+2d, \quad \dots, \quad \alpha_t=b+(t-1)d$$ for some integers $b$ and $d$.  We can now read off the $t$ differences as 
$$0<d<2d<\dots<(t-1)d<n.$$
Evaluating $$(\alpha_t-\alpha_1)+(\alpha_{\frac{t}{2}+1}-\alpha_{\frac{t}{2}})=(t-1)d+d=td=n$$
lets us conclude that $td=n$ and $d=\frac{n}{t}$, but this is impossible since $t$ (which is even) does not divide $n$ (which is odd).  So there cannot be equality for even $t \geq 4$.
\\

\textbf{Case $2$.} $n$ odd, $t$ odd.\\

First, note that $|\prod_1(S)|=1$ is trivially true for $t=1$.  Now, consider $t=3$.  Let $\alpha=\alpha_1+\alpha_2+\alpha_3+m$ so that $\prod_3(S)=\{x^{\alpha-2\alpha_1}y, x^{\alpha-2\alpha_2}y, x^{\alpha-2\alpha_3}y\}$, where the exponents are distinct modulo $n$ because $n$ is odd.  So $|\prod_3(S)|=3$ for all choices of $0\leq \alpha_1<\alpha_2<\alpha_3<n$.
\\

Henceforth, consider $t \geq 5$.  Write $$(x^{\alpha_{\frac{t-1}{2}+1}}y)(x^{\alpha_1}y)(x^{\alpha_{\frac{t-1}{2}+2}}y)(x^{\alpha_2}y)\cdots (x^{\alpha_{t-1}}y)(x^{\alpha_{\frac{t-1}{2}}}y)(x^{\alpha_t}y)=x^{\alpha}y\in \prod_t(S)$$ where $\alpha=(\alpha_{\frac{t-1}{2}+1}+\alpha_{\frac{t-1}{2}+2}+\dots +\alpha_t)-(\alpha_1+\alpha_2+\dots +\alpha_{\frac{t-1}{2}})+(\frac{t-1}{2})m$.   Following Case $1$, we have $$x^{\alpha-2(\alpha_{\frac{t-1}{2}+j}-\alpha_i)}y \in \prod_t(S)$$ for $1 \leq i \leq \frac{t-1}{2}$, $1 \leq j \leq \frac{t+1}{2}$.
Also as above, we have $$0<\alpha_{\frac{t-1}{2}+1}-\alpha_{\frac{t-1}{2}}<\dots <\alpha_{\frac{t-1}{2}+1}-\alpha_1<\alpha_{\frac{t-1}{2}+2}-\alpha_1<\dots <\alpha_t-\alpha_1<n,$$ where these $t$ differences (including $0$) remain distinct modulo $n$ when they are doubled, so $|\prod_t(S)|\geq t$.
\\

Suppose there is equality.  The reasoning used in Case $1$ shows that
$$\alpha_{\frac{t-1}{2}+2}-\alpha_{\frac{t-1}{2}+1}=\alpha_2-\alpha_1=\alpha_3-\alpha_2=\dots =\alpha_{\frac{t-1}{2}}-\alpha_{\frac{t-1}{2}-1}$$ and
$$\alpha_2-\alpha_1=\alpha_{\frac{t-1}{2}+3}-\alpha_{\frac{t-1}{2}+2}=\alpha_{\frac{t-1}{2}+4}-\alpha_{\frac{t-1}{2}+3}=\dots =\alpha_t-\alpha_{t-1}.$$
Also as in Case $1$, we can swap disjoint pairs of elements.  In particular, $(\alpha_t-\alpha_1)+(\alpha_{\frac{t-1}{2}+1}-\alpha_{\frac{t-1}{2}})=n$ and $(\alpha_t-\alpha_1)+(\alpha_{\frac{t-1}{2}+2}-\alpha_{\frac{t-1}{2}})=n+(\alpha_{\frac{t-1}{2}+1}-\alpha_{\frac{t-1}{2}})$.  Substituting for $n$ gives
$$(\alpha_t-\alpha_1)+(\alpha_{\frac{t-1}{2}+2}-\alpha_{\frac{t-1}{2}})=((\alpha_t-\alpha_1)+(\alpha_{\frac{t-1}{2}+1}-\alpha_{\frac{t-1}{2}}))+(\alpha_{\frac{t-1}{2}+1}-\alpha_{\frac{t-1}{2}}),$$ which after cancellations becomes
$$\alpha_{\frac{t-1}{2}+2}-\alpha_{\frac{t-1}{2}+1}=\alpha_{\frac{t-1}{2}+1}-\alpha_{\frac{t-1}{2}}.$$
Putting everything together gives $$\alpha_2-\alpha_1=\alpha_3-\alpha_2=\dots=\alpha_t-\alpha_{t-1}$$
and (following Case $1$)
$$\alpha_1=b, \quad \alpha_2=b+\frac{n}{t}, \quad \alpha_3=b+\frac{2n}{t}, \quad \dots, \quad \alpha_t=\frac{(t-1)n}{t}$$ where $0 \leq b \leq \frac{n}{t}-1$ and (obviously) $t$ divides $n$, as desired.
\\

To see that this necessary condition is also sufficient, let the $\alpha_i$'s be given as above.  Then any element of $\prod_t(S)$ is of the form $x^{\alpha+2\ell(\frac{n}{t})}y$ for some integer $\ell$.  Since $n$ is odd, $\frac{2n}{t}$ has additive order $t$ modulo $n$, an the exponent can assume at most $t$ distinct values modulo $n$.  Thus, $|\prod_t(S)|\leq t$ and in fact $|\prod_t(S)|=t$.
\\

\textbf{Case $3$.} $n$ even, $t$ even.\\

First, consider $t=0$.  $\prod_0(S)=\{1\}$ and $|\prod_0(S)|=1\geq \frac{0}{2}$.  Now, consider $t=2$.  $\prod_2(S)=\{x^{\alpha_2-\alpha_1+m}, x^{\alpha_1-\alpha_2+m}\}$, where these two elements coincide exactly when $\alpha_2-\alpha_1=\frac{n}{2}$.  So we always have $|\prod_2(S)|\geq 1= \frac{2}{2}$, with equality exactly when $\{\alpha_1, \alpha_2\}=\{b, b+\frac{n}{2}\}$ for some $0\leq b \leq \frac{n}{2}-1$.
\\

Henceforth, consider $t \geq 4$.  Define $\alpha$ as in Case $1$ so that $x^{\alpha} \in \prod_t (S)$ and $x^{\alpha -2(\alpha_{\frac{t}{2}+j}-\alpha_i)}\in \prod_t (S)$ for all $1 \leq i, j \leq \frac{t}{2}$.   Recall the list of $t$ differences from Case $1$.  Because $n$ is now even, we are guaranteed only $\frac{t}{2}$ distinct values modulo $n$ when we double all the differences.  Hence, $|\prod_t(S)|\geq \frac{t}{2}$.
\\

Suppose there is equality.  We must have $|\{0\}\cup \{\alpha_{\frac{t}{2}+j}-\alpha_i \mid 1 \leq i, j \leq \frac{t}{2}\}|=t$ since if this set were any larger, doubling all the values would yield more than $\frac{t}{2}$ distinct sums modulo $n$.  So this set equals the set of the $t$ listed differences.
\\

Now, consider $t=4$.  From the argument in Case $1$, we have $\alpha_2-\alpha_1=\alpha_4-\alpha_3$ and $2(\alpha_3-\alpha_1)=n$, which implies $\alpha_3-\alpha_1=\frac{n}{2}$.  Furthermore, because double-counting requires the $4$ listed differences come in pairs separated by $\frac{n}{2}$, we also have $\alpha_4-\alpha_1=\frac{n}{2}+(\alpha_3-\alpha_2)$, which implies $2(\alpha_2-\alpha_1)=\frac{n}{2}$ and $\alpha_2-\alpha_1=\frac{n}{4}$.  Now, we can read off
$$\alpha_1=b, \quad \alpha_2=b+\frac{n}{4}, \quad \alpha_3=b+\frac{2n}{4}, \quad \alpha_4=b+\frac{3n}{4},$$ where $0 \leq b \leq \frac{n}{4}-1$ and $4$ divides $n$, as desired.
\\

For $t \geq 6$, the argument of Case $1$ gives $$\{\alpha_1, \dots, \alpha_t\}=\{b+\frac{kn}{t} \mid 0\leq k \leq t-1\}$$ where $0\leq b \leq \frac{n}{t}-1$ and $t$ divides $n$.  To see the sufficiency of this condition (for all $n \geq 4$), let the $\alpha_i$'s be given as above.  Analogously to Case $2$, any element of $\prod_t(S)$ is of the form $x^{\alpha+2\ell(\frac{n}{t})}$.  Since $t$ is even, $\frac{2n}{t}$ divides $n$ and has order $\frac{t}{2}$ modulo $n$.  So the exponent assumes at most $\frac{t}{2}$ distinct values modulo $n$, and $|\prod_t(S)| \leq \frac{t}{2}$, as desired.
\\

\textbf{Case $4$.} $n$ even, $t$ odd.\\

First, note that $|\prod_1(S)|=1$ is trivially true for $t=1$.  Now, consider $t=3$.  Let $\alpha=\alpha_1+\alpha_2+\alpha_3+m$ so that $\prod_3(S)=\{x^{\alpha-2\alpha_1}y, x^{\alpha-2\alpha_2}y, x^{\alpha-2\alpha_3}y\}$, where $x^{\alpha-2\alpha_i}=x^{\alpha-2\alpha_j}$ if and only if $\alpha_i$ and $\alpha_j$ differ by a multiple of $\frac{n}{2}$.  Since $0 \leq \alpha_1<\alpha_2<\alpha_3<n$, some two of them must differ by strictly less than $\frac{n}{2}$, so $|\prod_3(S)| \geq 2=\frac{3+1}{2}$.  Equality occurs exactly when $\frac{n}{2}\in \{\alpha_2-\alpha_1, \alpha_3-\alpha_1, \alpha_3-\alpha_2\}$.  (Note that this condition puts no constraint on the last $\alpha_i$.)
\\

Henceforth, consider $t \geq 5$.  Define $\alpha$ as in Case $2$ so that $x^{\alpha}y \in \prod_t (S)$ and $x^{\alpha -2(\alpha_{\frac{t-1}{2}+j}-\alpha_i)}y \in \prod_t (S)$ for all $1 \leq i \leq \frac{t-1}{2}$, $1 \leq j \leq \frac{t+1}{2}$.   Recall the list of $t$ differences from Case $2$.  When we double all the differences, we are guaranteed only $\lceil \frac{t}{2} \rceil =\frac{t+1}{2}$ distinct values modulo $n$.  Hence, $|\prod_t(S)|\geq \frac{t+1}{2}$.
\\

Suppose there is equality.  Then $t-1$ of the $t$ listed differences come in pairs $(c, c+\frac{n}{2})$.  Let $c^{\ast}$ be the ``missing'' difference corresponding to the one unpaired difference.  If we add $c^{\ast}$ to the list of $t$ differences, then we know that only these $t+1$ differences (up to multiples of $n$) are attainable via the types of swaps discussed in the previous cases.  We can distinguish the subcases $0<c^{\ast}<\frac{n}{2}$, $c^{\ast}=\frac{n}{2}$, and $\frac{n}{2}<c^{\ast}<n$.  (We know that $c^{\ast}\neq 0$ since $0$ is never missing.)  For the first and third subcases, $t=5$ will require special treatment.
\\

\textbf{Case $4$A.} $0<c^{\ast}<\frac{n}{2}$.
\\

First, consider $t=5$.  Because of the pairings of elements, we have
$$0<\alpha_3-\alpha_2<\frac{n}{2}=\alpha_3-\alpha_1<\alpha_4-\alpha_1<\alpha_5-\alpha_1<n,$$
where there are no missing differences greater than or equal to $\frac{n}{2}$.  From $0<\alpha_3-\alpha_2<\alpha_4-\alpha_2<\alpha_4-\alpha_1$, we can use the fact that the differences come in pairs to conclude that
$$\alpha_3-\alpha_2, \alpha_4-\alpha_2 \in \{(\alpha_4-\alpha_1)-\frac{n}{2}, (\alpha_5-\alpha_1)-\frac{n}{2}, \frac{n}{2}\}=\{\alpha_4-\alpha_3, \alpha_5-\alpha_3, \alpha_3-\alpha_1\}.$$
Since $\alpha_3-\alpha_2<\alpha_4-\alpha_2$, there are three possible ways to assign these values.  First, if $\alpha_3-\alpha_2=\alpha_4-\alpha_3$ and $\alpha_4-\alpha_2=\alpha_5-\alpha_3$, then we immediately get $$\alpha_3-\alpha_2=\alpha_4-\alpha_3=\alpha_5-\alpha_4.$$
Furthermore, $\alpha_5-\alpha_1<(\alpha_5-\alpha_1)+(\alpha_3-\alpha_2)<n+(\alpha_3-\alpha_2)$ implies $$(\alpha_5-\alpha_1)+(\alpha_3-\alpha_2)=n=2(\frac{n}{2})=2(\alpha_3-\alpha_1).$$
Cancelling gives $\alpha_5-\alpha_2=\alpha_3-\alpha_1$ and $\alpha_2-\alpha_1=\alpha_5-\alpha_3=2(\alpha_3-\alpha_2)$.
We can now evaluate $$\alpha_3-\alpha_1=(\alpha_3-\alpha_2)+(\alpha_2-\alpha_1)=3(\alpha_3-\alpha_2)=\frac{n}{2},$$
which yields $\alpha_3-\alpha_2=\frac{n}{6}$.  We can now read off
$$\alpha_1=b, \quad \alpha_2=b+\frac{2n}{6}, \quad \alpha_3=b+\frac{3n}{6}, \quad \alpha_4=b+\frac{4n}{6}, \quad \alpha_5=b+\frac{5n}{6},$$ where $0 \leq b \leq \frac{n}{6}-1$, and this has the desired form.  Second, if $\alpha_3-\alpha_2=\alpha_4-\alpha_3$ and $\alpha_4-\alpha_2=\alpha_3-\alpha_1$, then we immediately get
$$\alpha_2-\alpha_1=\alpha_3-\alpha_2=\alpha_4-\alpha_3.$$
As above, $(\alpha_5-\alpha_1)+(\alpha_3-\alpha_2)=n=2(\alpha_3-\alpha_1)$ leads to $\alpha_5-\alpha_3=\alpha_2-\alpha_1=\alpha_5-\alpha_4$.  But this implies $\alpha_5=\alpha_4$, which is a contradiction, so this possibility cannot occur.  Third, if $\alpha_3-\alpha_2=\alpha_5-\alpha_3$ and $\alpha_4-\alpha_2=\alpha_3-\alpha_1$, then $\alpha_3-\alpha_1=\alpha_4-\alpha_2<\alpha_5-\alpha_2<\alpha_5-\alpha_1$ implies $\alpha_5-\alpha_2=\alpha_4-\alpha_1$, and we get
$$\alpha_2-\alpha_1=\alpha_4-\alpha_3=\alpha_5-\alpha_4.$$
Moreover, $\alpha_3-\alpha_2=\alpha_5-\alpha_3=2(\alpha_2-\alpha_1)$.  We can now evaluate $\alpha_3-\alpha_1=3(\alpha_2-\alpha_1)=\frac{n}{2}$, which yields $\alpha_2-\alpha_1=\frac{n}{6}$.  Finally, we can read off
$$\alpha_1=b, \quad \alpha_2=b+\frac{n}{6}, \quad \alpha_3=b+\frac{3n}{6}, \quad \alpha_4=b+\frac{4n}{6}, \quad \alpha_5=b+\frac{5n}{6},$$ as desired.  This completes the casework and establishes the result for $t=5$.
\\

Henceforth, consider $t \geq 7$.  We have
$$\alpha_{\frac{t-1}{2}+1}-\alpha_{\frac{t-1}{2}}<\dots<\alpha_{\frac{t-1}{2}+1}-\alpha_2<\frac{n}{2}=\alpha_{\frac{t-1}{2}+1}-\alpha_1<\alpha_{\frac{t-1}{2}+2}-\alpha_1<\dots<\alpha_t-\alpha_1,$$
where there are no missing differences greater than or equal to $\frac{n}{2}$.  As such, $\alpha_{\frac{t-1}{2}+1}-\alpha_2<\alpha_{\frac{t-1}{2}+2}-\alpha_2<\alpha_{\frac{t-1}{2}+2}-\alpha_1$ implies that either $\alpha_{\frac{t-1}{2}+2}-\alpha_2=\frac{n}{2}$ or $\alpha_{\frac{t-1}{2}+2}-\alpha_2=(\alpha_t-\alpha_1)-\frac{n}{2}$.  We consider these two possibilities separately.
\\

If $\alpha_{\frac{t-1}{2}+2}-\alpha_2=\frac{n}{2}$, then $\alpha_{\frac{t-1}{2}+2}-\alpha_{\frac{t-1}{2}+1}=\alpha_2-\alpha_1$, and we can write
$$\frac{n}{2}=\alpha_{\frac{t-1}{2}+2}-\alpha_2<\alpha_{\frac{t-1}{2}+2}-\alpha_1<\dots<\alpha_t-\alpha_1,$$ where there are no missing differences in this list.  Then, by the now-familiar argument presented in Case $1$, we get $\alpha_{\frac{t-1}{2}+3}-\alpha_2=\alpha_{\frac{t-1}{2}+2}-\alpha_1$ and $\alpha_{\frac{t-1}{2}+3}-\alpha_{\frac{t-1}{2}+2}=\alpha_2-\alpha_1$.  Continuing this process gives
$$\alpha_2-\alpha_1=\alpha_{\frac{t-1}{2}+2}-\alpha_{\frac{t-1}{2}+1}=\alpha_{\frac{t-1}{2}+3}-\alpha_{\frac{t-1}{2}+2}=\dots=\alpha_t-\alpha_{t-1}.$$
Next, $\frac{n}{2}+(\alpha_t-\alpha_1)<(\alpha_{\frac{t-1}{2}+3}-\alpha_2)+(\alpha_t-\alpha_1)<n+(\alpha_{\frac{t-1}{2}+2}-\alpha_1)$ implies that $(\alpha_{\frac{t-1}{2}+3}-\alpha_2)+(\alpha_t-\alpha_1)=\frac{3n}{2}=\frac{n}{2}+2(\alpha_{\frac{t-1}{2}+1}-\alpha_1)$.  Substituting $\alpha_{\frac{t-1}{2}+3}-\alpha_2=\alpha_{\frac{t-1}{2}+2}-\alpha_1$ and cancelling gives
$$(\alpha_{\frac{t-1}{2}+2}-\alpha_{\frac{t-1}{2}+1})+(\alpha_t-\alpha_{\frac{t-1}{2}+1})=\frac{n}{2}.$$
We can now use the chain of equalities to write
$$(\alpha_2-\alpha_1)+(\frac{t-1}{2})(\alpha_2-\alpha_1)=(\frac{t+1}{2})(\alpha_2-\alpha_1)=\frac{n}{2},$$
which lets us conclude that $\alpha_2-\alpha_1=\frac{n}{t+1}$.  Setting $b=\alpha_1$ and $d=\frac{n}{t+1}$, we can now read off
$$\alpha_{\frac{t-1}{2}+1}=b+\frac{n}{2}, \quad \alpha_{\frac{t-1}{2}+2}=b+d+\frac{n}{2}, \quad \dots, \quad \alpha_t=b+(\frac{t-1}{2})d+\frac{n}{2}.$$
Substituting $\frac{n}{2}=(\frac{t-1}{2}+1)d$ yields
$$\alpha_{\frac{t-1}{2}+1}=b+(\frac{t-1}{2}+1)d, \quad \alpha_{\frac{t-1}{2}+2}=b+(\frac{t-1}{2}+2)d, \quad \dots, \quad \alpha_t=b+(t-1)d.$$
Let $c^{\ast}=\alpha_{\frac{t-1}{2}+1}-\alpha^{\ast}$.  Then, since the differences come in pairs separated by $\frac{n}{2}$, we have
$$\{\alpha_{\frac{t-1}{2}+1}-\alpha^{\ast}, \alpha_{\frac{t-1}{2}+1}-\alpha_2, \alpha_{\frac{t-1}{2}+1}-\alpha_3, \dots, \alpha_{\frac{t-1}{2}+1}-\alpha_{\frac{t-1}{2}}\}=\{d, 2d, \dots, (\frac{t-1}{2})d\}$$ and hence $$\{\alpha^{\ast}, \alpha_2, \alpha_3, \dots, \alpha_{\frac{t-1}{2}}\}=\{b+d, b+2d, \dots, b+(\frac{t-1}{2})d\}.$$  Putting everything together gives
$$\{\alpha^{\ast}, \alpha_1, \alpha_2, \dots, \alpha_t\}=\{b+kd \mid 0 \leq k \leq t\}$$ where without loss of generality we can take $0 \leq b \leq \frac{n}{t+1}-1$, as desired.
\\

If $\alpha_{\frac{t-1}{2}+2}-\alpha_2=(\alpha_t-\alpha_1)-\frac{n}{2}$, then there are no missing elements in
$$\alpha_{\frac{t-1}{2}+1}-\alpha_{\frac{t-1}{2}}<\dots<\alpha_{\frac{t-1}{2}+1}-\alpha_2<\alpha_{\frac{t-1}{2}+2}-\alpha_2<\frac{n}{2}=\alpha_{\frac{t-1}{2}+1}-\alpha_1<\alpha_{\frac{t-1}{2}+2}-\alpha_1<\dots<\alpha_t-\alpha_1.$$
Applying the argument of Case $1$ to the smaller differences gives
$$\alpha_{\frac{t-1}{2}+2}-\alpha_{\frac{t-1}{2}+1}=\alpha_3-\alpha_2=\alpha_4-\alpha_3=\dots=\alpha_{\frac{t-1}{2}}-\alpha_{\frac{t-1}{2}-1}.$$
The relations on the pairs of differences give us
$$(\alpha_{\frac{t-1}{2}+2}-\alpha_1)-(\alpha_{\frac{t-1}{2}+1}-\alpha_{\frac{t-1}{2}})=\frac{n}{2}=\alpha_{\frac{t-1}{2}+1}-\alpha_1,$$ which becomes $\alpha_{\frac{t-1}{2}+2}-\alpha_{\frac{t-1}{2}+1}=\alpha_{\frac{t-1}{2}+1}-\alpha_{\frac{t-1}{2}}$.  Next, the chain of equalities means that the equation
$$(\alpha_{\frac{t-1}{2}+1}-\alpha_{\frac{t-1}{2}})+(\alpha_{\frac{t-1}{2}+2}-\alpha_2)=\frac{n}{2}=\alpha_{\frac{t-1}{2}+1}-\alpha_1,$$
becomes $2(\alpha_3-\alpha_2)=\alpha_2-\alpha_1$.  We can now compute
$$\alpha_{\frac{t-1}{2}+1}-\alpha_1=(\frac{t+1}{2})(\alpha_3-\alpha_2)=\frac{n}{2},$$
which lets us conclude that $\alpha_3-\alpha_2=\frac{n}{t+1}$.  Setting $b=\alpha_1$ and $d=\frac{n}{t+1}$, we can read off
$$\alpha_1=b, \quad \alpha_2=b+2d, \quad \alpha_3=b+3d, \quad \dots, \quad \alpha_{\frac{t-1}{2}+2}=b+(\frac{t-1}{2}+2)d.$$
As above, considering the pairs of differences lets us fill in the remaining values as
$$\alpha_{\frac{t-1}{2}+3}=b+(\frac{t-1}{2}+3)d, \quad \dots, \quad \alpha_t=b+td,$$
and putting everything together gives
$$\{\alpha_1, \alpha_2, \dots, \alpha_t=\{b+kd \mid 0\leq k \leq t\}\setminus \{b+d\}.$$
So both subcases yield the desired result.
\\

\textbf{Case $4$B.} $c^{\ast}=\frac{n}{2}$.
\\

We know exactly where $c^{\ast}$ falls, so we can write the complete list of differences $$0<\alpha_{\frac{t-1}{2}+1}-\alpha_{\frac{t-1}{2}}<\dots <\alpha_{\frac{t-1}{2}+1}-\alpha_1< c^{\ast}=\frac{n}{2}<\alpha_{\frac{t-1}{2}+2}-\alpha_1< \dots <\alpha_t-\alpha_1<n.$$
From $\alpha_{\frac{t-1}{2}+1}-\alpha_2<\alpha_{\frac{t-1}{2}+2}-\alpha_2<\alpha_{\frac{t-1}{2}+2}-\alpha_1$, we conclude that either $\alpha_{\frac{t-1}{2}+2}-\alpha_2=\alpha_{\frac{t-1}{2}+1}-\alpha_1$ or $\alpha_{\frac{t-1}{2}+2}-\alpha_2=\frac{n}{2}$. We consider these two possibilities separately.
\\

If $\alpha_{\frac{t-1}{2}+2}-\alpha_2=\alpha_{\frac{t-1}{2}+1}-\alpha_1$, then $\alpha_{\frac{t-1}{2}+2}-\alpha_{\frac{t-1}{2}+1}=\alpha_2-\alpha_1$, and there are no missing differences in $$0<\alpha_{\frac{t-1}{2}+1}-\alpha_{\frac{t-1}{2}}<\dots <\alpha_{\frac{t-1}{2}+1}-\alpha_2<\alpha_{\frac{t-1}{2}+2}-\alpha_2,$$ and we can use the argument of Case $1$ to get
$$\alpha_{\frac{t-1}{2}+2}-\alpha_{\frac{t-1}{2}+1}=\alpha_2-\alpha_1=\alpha_3-\alpha_2=\dots=\alpha_{\frac{t-1}{2}}-\alpha_{\frac{t-1}{2}-1}.$$
Since there are no missing differences larger than $\frac{n}{2}$, it is easy to identify the pairs of differences, and, in particular,
$$(\alpha_{\frac{t-1}{2}+2}-\alpha_1)-(\alpha_{\frac{t-1}{2}+1}-\alpha_{\frac{t-1}{2}})=\frac{n}{2}=(\alpha_{\frac{t-1}{2}+3}-\alpha_1)-(\alpha_{\frac{t-1}{2}+1}-\alpha_{\frac{t-1}{2}-1}).$$
After cancellations, this becomes
$$\alpha_{\frac{t-1}{2}}-\alpha_{\frac{t-1}{2}-1}=\alpha_{\frac{t-1}{2}+3}-\alpha_{\frac{t-1}{2}+2},$$
so we can add $\alpha_{\frac{t-1}{2}+3}-\alpha_{\frac{t-1}{2}+2}$ to our chain of equalities.  Thus, we can apply the argument of Case $1$ to
$$\alpha_{\frac{t-1}{2}+2}-\alpha_1=\alpha_{\frac{t-1}{2}+3}-\alpha_2<\alpha_{\frac{t-1}{2}+3}-\alpha_1<\dots<\alpha_t-\alpha_1$$ to get
$$\alpha_2-\alpha_1=\alpha_{\frac{t-1}{2}+4}-\alpha_{\frac{t-1}{2}+3}=\alpha_{\frac{t-1}{2}+5}-\alpha_{\frac{t-1}{2}+4}=\dots=\alpha_t-\alpha_{t-1}.$$

Now, $\alpha_t-\alpha_1<(\alpha_t-\alpha_1)+(\alpha_{\frac{t-1}{2}+1}-\alpha_{\frac{t-1}{2}})<n+(\alpha_{\frac{t-1}{2}+1}-\alpha_{\frac{t-1}{2}})$ implies $(\alpha_t-\alpha_1)+(\alpha_{\frac{t-1}{2}+1}-\alpha_{\frac{t-1}{2}})=n$.  Similarly, $n<(\alpha_t-\alpha_1)+(\alpha_{\frac{t-1}{2}+2}-\alpha_{\frac{t-1}{2}})<n+(\alpha_{\frac{t-1}{2}+2}-\alpha_{\frac{t-1}{2}})$ implies $(\alpha_t-\alpha_1)+(\alpha_{\frac{t-1}{2}+2}-\alpha_{\frac{t-1}{2}})=n+(\alpha_{\frac{t-1}{2}+1}-\alpha_{\frac{t-1}{2}})$.  Substituting for $n$ gives
$$(\alpha_t-\alpha_1)+(\alpha_{\frac{t-1}{2}+2}-\alpha_{\frac{t-1}{2}})=((\alpha_t-\alpha_1)+(\alpha_{\frac{t-1}{2}+1}-\alpha_{\frac{t-1}{2}}))+(\alpha_{\frac{t-1}{2}+1}-\alpha_{\frac{t-1}{2}}),$$
and cancelling yields
$$\alpha_{\frac{t-1}{2}+2}-\alpha_{\frac{t-1}{2}+1}=\alpha_{\frac{t-1}{2}+1}-\alpha_{\frac{t-1}{2}}.$$
So, in fact, 
$$\alpha_2-\alpha_1=\alpha_3-\alpha_2=\dots=\alpha_t-\alpha_{t-1}.$$
We can now evaluate
$$(\alpha_t-\alpha_1)+(\alpha_{\frac{t-1}{2}+1}-\alpha_{\frac{t-1}{2}})=t(\alpha_2-\alpha_1)=n,$$
which implies $\alpha_2-\alpha_1=\frac{n}{t}$.  Setting $b=\alpha_1$ and $d=\frac{n}{t}$, we can read off
$$\alpha_1=b, \quad \alpha_2=b+d, \quad \dots, \quad \alpha_t=b+(t-1)d.$$
But then $$(\alpha_{\frac{t-1}{2}+1}-\alpha_1)-(\alpha_{\frac{t-1}{2}+1}-\alpha_{\frac{t-1}{2}})=\alpha_{\frac{t-1}{2}}-\alpha_1=(\frac{t-1}{2}-1)d=\frac{(t-3)n}{2t}<\frac{n}{2}$$
yields a contradiction because we know that the left-most quantity equals $\frac{n}{2}$.  Thus, this possibility cannot occur.
\\

If $\alpha_{\frac{t-1}{2}+2}-\alpha_2=\frac{n}{2}$, then we have the complete list of differences
$$0<\alpha_{\frac{t-1}{2}+1}-\alpha_{\frac{t-1}{2}}<\dots <\alpha_{\frac{t-1}{2}+1}-\alpha_1<\frac{n}{2}=\alpha_{\frac{t-1}{2}+2}-\alpha_2<\alpha_{\frac{t-1}{2}+2}-\alpha_1< \dots <\alpha_t-\alpha_1<n.$$
Applying the argument from Case $1$ to the larger differences gives
$$\alpha_2-\alpha_1=\alpha_{\frac{t-1}{2}+3}-\alpha_{\frac{t-1}{2}+2}=\alpha_{\frac{t-1}{2}+4}-\alpha_{\frac{t-1}{2}+3}=\dots=\alpha_t-\alpha_{t-1}.$$
So consecutive differences greater than or equal to $\frac{n}{2}$ differ by exactly $\alpha_2-\alpha_1$.  Because of the correspondence of the pairs of differences, we know that consecutive differences less than $\frac{n}{2}$ (including $0$) also differ by $\alpha_2-\alpha_1$.  Hence,
$$\alpha_2-\alpha_1=\alpha_{\frac{t-1}{2}+1}-\alpha_{\frac{t-1}{2}}=\alpha_{\frac{t-1}{2}}-\alpha_{\frac{t-1}{2}-1}=\dots=\alpha_3-\alpha_2$$ can be added to the chain of equalities.  Now, $\alpha_t-\alpha_1<(\alpha_{\frac{t-1}{2}+1}-\alpha_{\frac{t-1}{2}})+(\alpha_t-\alpha_1)<n+(\alpha_{\frac{t-1}{2}+1}-\alpha_{\frac{t-1}{2}})$ implies $(\alpha_{\frac{t-1}{2}+1}-\alpha_{\frac{t-1}{2}})+(\alpha_t-\alpha_1)=n$.  Substituting for $\frac{n}{2}$ leads to:
\begin{align*}
(\alpha_{\frac{t-1}{2}+1}-\alpha_{\frac{t-1}{2}})+(\alpha_t-\alpha_1)&=2(\alpha_{\frac{t-1}{2}+2}-\alpha_2)\\
(\alpha_{\frac{t-1}{2}+2}-\alpha_{\frac{t-1}{2}+1})+(t-1)(\alpha_2-\alpha_1)&=2(\alpha_{\frac{t-1}{2}+2}-\alpha_{\frac{t-1}{2}+1})+2(\frac{t-1}{2}-1)(\alpha_2-\alpha_1)\\
2(\alpha_2-\alpha_1)&=\alpha_{\frac{t-1}{2}+2}-\alpha_{\frac{t-1}{2}+1}
\end{align*}
We can now evaluate
$$\alpha_{\frac{t-1}{2}+2}-\alpha_2=(\alpha_{\frac{t-1}{2}+2}-\alpha_{\frac{t-1}{2}+1})+(\frac{t-1}{2}-1)(\alpha_2-\alpha_1)=(\frac{t+1}{2})(\alpha_2-\alpha_1)=\frac{n}{2},$$
which implies $\alpha_2-\alpha_1=\frac{n}{t+1}$ (where obviously $t+1$ divides $n$).  Setting $b=\alpha_1$ and $d=\frac{n}{t+1}$, we can read off
$$\alpha_1=b, \quad \alpha_2=b+d, \quad \dots, \quad \alpha_{\frac{t-1}{2}+1}=b+(\frac{t-1}{2})d, \quad \alpha_{\frac{t-1}{2}+2}=b+(\frac{t-1}{2}+2)d,$$
$$\alpha_{\frac{t-1}{2}+3}=b+(\frac{t-1}{2}+3)d, \quad \dots, \quad \alpha_t=b+td,$$
where $0 \leq b \leq \frac{n}{t+1}-1$, as desired.
\\

\textbf{Case $4$C.} $\frac{n}{2}<c^{\ast}<n$.
\\

Since there are no ``missing'' elements in $$0<\alpha_{\frac{t-1}{2}+1}-\alpha_{\frac{t-1}{2}}<\dots <\alpha_{\frac{t-1}{2}+1}-\alpha_1<\alpha_{\frac{t-1}{2}+2}-\alpha_1=\frac{n}{2},$$ we can apply the argument from Case $1$ to get $$\alpha_{\frac{t-1}{2}+2}-\alpha_{\frac{t-1}{2}+1}=\alpha_2-\alpha_1=\alpha_3-\alpha_2=\dots=\alpha_{\frac{t-1}{2}}-\alpha_{\frac{t-1}{2}-1}.$$  
\vspace{0 mm}

Next, I claim that $\alpha_{\frac{t-1}{2}+1}-\alpha_{\frac{t-1}{2}}$ can be added to this chain of equalities.  To verify this claim, we must treat $t=5$ and $t \geq 7$ separately.  First, consider $t=5$.  We have $$0<\alpha_3-\alpha_2<\alpha_3-\alpha_1=\alpha_4-\alpha_2<\alpha_5-\alpha_2<\alpha_5-\alpha_1<n,$$ where the fact that the differences come in pairs tells us that $$\alpha_5-\alpha_2, \alpha_5-\alpha_1 \in \{\frac{n}{2}, \frac{n}{2}+(\alpha_3-\alpha_2), \frac{n}{2}+(\alpha_3-\alpha_1)\}.$$
Since $\alpha_5-\alpha_2<\alpha_5-\alpha_1$, there are in fact only three possibilities for assigning these values.  First, if $\alpha_5-\alpha_2=\frac{n}{2}$ and $\alpha_5-\alpha_1=\frac{n}{2}+(\alpha_3-\alpha_2)$, then
$$\alpha_2-\alpha_1=(\alpha_5-\alpha_1)-(\alpha_5-\alpha_2)=\alpha_3-\alpha_2,$$ as desired.  Second, if $\alpha_5-\alpha_2=\frac{n}{2}$ and $\alpha_5-\alpha_1=\frac{n}{2}+(\alpha_3-\alpha_1)$, then $$\alpha_2-\alpha_1=(\alpha_5-\alpha_1)-(\alpha_5-\alpha_2)=\alpha_3-\alpha_1$$ implies $\alpha_2=\alpha_3$, which is a contradiction, so this possibility cannot occur.  Third, if $\alpha_5-\alpha_2=\frac{n}{2}+(\alpha_3-\alpha_2)$ and $\alpha_5-\alpha_1=\frac{n}{2}+(\alpha_3-\alpha_1)$, then we have the complete list of differences
$$0<\alpha_3-\alpha_2<\alpha_4-\alpha_2<\alpha_4-\alpha_1=\frac{n}{2}<\alpha_5-\alpha_2<\alpha_5-\alpha_1<n.$$  Note in particular that $\alpha_3-\alpha_2=(\alpha_5-\alpha_2)-\frac{n}{2}$.  Recall that the differences arise from swappping elements in the product $(x^{\alpha_3}y)(x^{\alpha_1}y)(x^{\alpha_4}y)(x^{\alpha_2}y)(x^{\alpha_5}y)=x^{\alpha}y$, where we are allowed to swap $\alpha_3$, $\alpha_4$, and $\alpha_5$ with $\alpha_1$ and $\alpha_2$.  If we instead begin with the product $(x^{\alpha_3}y)(x^{\alpha_5}y)(x^{\alpha_2}y)(x^{\alpha_4}y)(x^{\alpha_1}y)=x^{\alpha'}y$, then we can use the same argument as before, except that now we are allowed to swap $\alpha_4$ and $\alpha_5$ with $\alpha_1$, $\alpha_2$, and $\alpha_3$.  In particular, we can produce the list of differences
$$0<\alpha_4-\alpha_3<\alpha_4-\alpha_2<\alpha_4-\alpha_1=\frac{n}{2}<\alpha_5-\alpha_2<\alpha_5-\alpha_1<n.$$  Since we still have $|\prod_5(S)|=3$, we can conclude that these differences come in pairs separated by $\frac{n}{2}$, too.  In particular, we have $$\alpha_4-\alpha_3=(\alpha_5-\alpha_2)-\frac{n}{2}=\alpha_3-\alpha_2,$$ as desired.  This concludes the casework and establishes the claim for $t=5$.
\\

Now, we establish the claim for $t \geq 7$.  Since $\alpha_{\frac{t-1}{2}+1}-\alpha_1=\alpha_{\frac{t-1}{2}+2}-\alpha_2$,
$$\alpha_{\frac{t-1}{2}+1}-\alpha_1<(\alpha_{\frac{t-1}{2}+2}-\alpha_2)+(\alpha_{\frac{t-1}{2}+1}-\alpha_{\frac{t-1}{2}})<\frac{n}{2}+(\alpha_{\frac{t-1}{2}+1}-\alpha_{\frac{t-1}{2}}).$$
The two swaps in the middle expression are disjoint (since $n \geq 7$), so we can conclude that $(\alpha_{\frac{t-1}{2}+2}-\alpha_2)+(\alpha_{\frac{t-1}{2}+1}-\alpha_{\frac{t-1}{2}})=\frac{n}{2}=\alpha_{\frac{t-1}{2}+2}-\alpha_1$.  After cancellations, this becomes $\alpha_{\frac{t-1}{2}+1}-\alpha_{\frac{t-1}{2}}=\alpha_2-\alpha_1$, which establishes the claim for $n \geq 7$.
\\

From here (for all $n \geq 5$), putting everything together gives $$\alpha_2-\alpha_1=\dots=\alpha_{\frac{t-1}{2}+2}-\alpha_{\frac{t-1}{2}+1}.$$
We can now evaluate $$\alpha_{\frac{t-1}{2}+2}-\alpha_1=(\frac{t-1}{2}+1)(\alpha_2-\alpha_1)=(\frac{t+1}{2})(\alpha_2-\alpha_1)=\frac{n}{2},$$ which implies that $\alpha_2-\alpha_1=\frac{n}{t+1}$ (where $t+1$ divides $n$).  Setting $b=\alpha_1$ and $d=\frac{n}{t+1}$, we can read off
$$\alpha_1=b, \quad \alpha_2=b+d, \quad \dots, \quad \alpha_{\frac{t-1}{2}+2}=b+(\frac{t-1}{2}+1)d.$$
Let $c^{\ast}=\alpha^{\ast}-\alpha_1$.  Since the differences strictly between $0$ and $\frac{n}{2}$ are exactly $\{d, 2d, \dots, (\frac{t-1}{2})d\}$, the conditions on the pairs of differences give
$$\{\alpha^{\ast}, \alpha_{\frac{t-1}{2}+3}, \alpha_{\frac{t-1}{2}+4}, \dots, \alpha_t\}=\{b+d+\frac{n}{2}, b+2d+\frac{n}{2}, \dots, b+(\frac{t-1}{2})d+\frac{n}{2}\}.$$
Substituting $\frac{n}{2}=(\frac{t-1}{2}+1)d$ yields $$\{\alpha^{\ast}, \alpha_{\frac{t-1}{2}+3}, \alpha_{\frac{t-1}{2}+4}, \dots, \alpha_t\}=\{b+(\frac{t-1}{2}+2)d, b+(\frac{t-1}{2}+3)d, \dots, b+td\}.$$
Finally, putting everything together gives $$\{\alpha^{\ast}, \alpha_1, \alpha_2, \dots, \alpha_t\}=\{b+kd \mid 0 \leq k \leq t\},$$ where without loss of generality we can take $0\leq b \leq \frac{n}{t+1}-1$.
\\

This completes the casework for the subcases.  The sufficiency of the condition $$\{\alpha_1, \dots, \alpha_t\}=\{b+\frac{kn}{t+1} \mid 0\leq k \leq t\}\setminus \{b+\frac{\ell n}{t+1}\}$$ across the board follows as in Case $3$.
\end{proof}
\vspace{1mm}

\textbf{Acknowledgments.} This research was conducted at the University of Minnesota, Duluth REU and was supported by NSF/DMS grant 1650947 and NSA grant H98230-18-1-0010.  The author wishes to thank Joe Gallian for suggesting this problem and providing a supportive work environment.  The author also wishes to thank Andrew Kwon, Aaron Berger, and Joe Gallian for reading drafts of this paper and providing valuable advice.


\begin{thebibliography}{10}
\bibitem{alon} N. Alon and M. Dubiner, Zero-sum sets of prescribed size, Combinatorica, Paul Erd\H{o}s is eighty, Vol. 1, 33-50, J\'anos Bolyai Math. Soc., Budapest, 1993.
\bibitem{alonlattice} N. Alon and M. Dubiner, A lattice point problem and additive number theory, Combinatorica 15 (1995), 301-309.
\bibitem{dihedral} N. Balachandran, E. Mazumdar, and K. Zhao, The Harborth constant for dihedral groups, preprint (2018).
\bibitem{bass} J. Bass, Improving the Erd\H{o}s-Ginzburg-Ziv theorem for some non-abelian groups, J. Number Theory 126 (2007), 217-236.
\bibitem{caro} Y. Caro, Zero-sum problems--a survey, Discrete Math. 152 (1996), 93-113.
\bibitem{davenport} C. Delorme, O. Ordaz, and D. Quiroz, Some remarks on Davenport constant, Discrete Math. 237 (1-3) (2001), 119-128.
\bibitem{elsholtz} C. Elsholtz, Lower bounds for multidimensional zero sums, Combinatorica 24 (3) (2004), 351-358.
\bibitem{egz} P. Erd\H{o}s, A. Ginzburg, and A. Ziv, Theorem in the additive number theory, Bull. Research Council Israel 10F (1961), 41-43.
\bibitem{fox} J. Fox and L. Sauermann, Erd\H{o}s-Ginzburg-Ziv constants by avoiding three-term arithmetic progressions, Electr. J. Comb. 25 (2018), P2.14.
\bibitem{gandg} W. Gao and A. Geroldinger, Zero-sum problems in finite abelian groups: a survey, Expo. Math. 24 (2006), 337-369.
\bibitem{lu} W. Gao and Z. Lu, J. Pure Appl. Algebra 212 (2008), 311-319.
\bibitem{harborth} H. Harborth, Ein Extremalproblem f\"{u}r Gitterpunkte, J. Reine Angew. Math. 262/263 (1973), 356-360.
\bibitem{abelian} L. E. Marchan, O. Ordaz, D. Ramos, and W. A. Schmid, Some exact values of the Harborth constant and its plus-minus weighted analogue, Arch. Math. Vol. 101 (2013), 501-512.
\bibitem{inverse} L. E. Marchan, O. Ordaz, D. Ramos, and W. Schmid. Inverse results for weighted Harborth constants, Int. J. Number Theory 12:07 (2016), 1845-1861.
\bibitem{martinez} F. Mart\'inez and S. Ribas, Extremal product-one free sequences in dihedral and dicyclic groups, Discrete Math. 341 (2018), 570-578.
\bibitem{reiher} C. Reiher, On Kemnitz' conjecture concerning lattice-points in the plane, Ramanujan J. 13:333 (2007), 333-337.
\bibitem{rankthree} W. Schmid, Direct zero-sum problems for certain groups of rank three, preprint (2018).
\bibitem{zhuang} J. Zhuang and W. Gao, Erd\H{o}s-Ginzburg-Ziv theorem for dihedral groups of large prime index, European J. Combin. 26 (2005), 1053-1059.

\end{thebibliography}
\end{document}